%=====================================================
%  Template File for Journal of Symbolic Computation
%=====================================================

\documentclass[final,1p,times,authoryear]{elsarticle}
\usepackage{amsmath}
\usepackage{amsthm}
\usepackage{amssymb}
\usepackage{amsfonts}
\usepackage{enumitem}
\setenumerate{itemsep=1.5pt,topsep=1.5pt}
\usepackage{algorithm}
\usepackage{algpseudocode}

\newtheorem{theorem}{Theorem}

\newtheorem{definition}[theorem]{Definition}

\newtheorem{example}[theorem]{Example}

\iffalse
\newenvironment{proof}{\noindent{\em Proof:}}{$\Box$~\\}
\fi

\newcommand{\N}{\mathcal{N}}

\newcommand{\id}{\textup{id}}

\newcommand{\p}{\phi}
\newcommand{\im}{\textup{im }}

\newcommand{\B}{\mathcal{B}}

\newcommand{\Res}{\textup{Res}}
\newcommand{\GL}{\textup{Gl}}

\newcommand{\rank}{\textup{rank}}

\newcommand{\coker}{\textup{coker}\,}

\newcommand{\VV}{\mathcal{V}}
\newcommand{\VVV}{\mathbf{V}}
\newcommand{\RR}{\mathbf{R}}
\newcommand{\Q}{\mathbf{Q}}
\newcommand{\U}{\mathbf{U}}
\renewcommand{\SS}{\mathbf{S}}
\renewcommand{\P}{\mathbf{P}}

\newcommand{\fb}{\textup{\textbf{f}}}

\usepackage{url}
\newcommand{\OO}{\mathcal{B}}

\newcommand{\Z}{\mathbb{Z}}

\newcommand{\CC}{\mathbb{C}}
\newcommand{\C}{\mathbb{C}}
\newcommand{\PP}{\mathbb{P}}

\newcommand{\V}{\mathbb{V}}
\newcommand{\W}{\mathcal{W}}

\newcommand{\D}{\delta}

\newcommand{\ideal}[1]{\langle{#1}\rangle}
\newcommand{\R}{\mathbb{R}}
\usepackage{tikz}
\usepackage{pgfplots}

\newcommand{\Span}{\textup{span}}

\usepackage{kbordermatrix}
\usepackage{tikz-cd}
\pgfplotsset{
  log x ticks with fixed point/.style={
      xticklabel={
        \pgfkeys{/pgf/fpu=true}
        \pgfmathparse{exp(\tick)}%
        \pgfmathprintnumber[fixed relative, precision=3]{\pgfmathresult}
        \pgfkeys{/pgf/fpu=false}
      }
  }}
\usepackage{pgfplotstable}
\usepackage{booktabs}
\newcolumntype{L}[1]{>{\raggedright\arraybackslash}p{#1}}
\newcolumntype{C}[1]{>{\centering\arraybackslash}p{#1}}
\newcolumntype{R}[1]{>{\raggedleft\arraybackslash}p{#1}}

\newcommand{\vspan}[1]{\langle #1 \rangle}

\newcommand{\eval}{\mathfrak{e}}

\begin{document}

\begin{frontmatter}

\title{Truncated Normal Forms for Solving Polynomial Systems: Generalized and Efficient Algorithms}

\author{Bernard Mourrain}
\address{AROMATH, INRIA, Sophia-Antipolis, 06902, France}
\ead{bernard.mourrain@inria.fr}
%\ead[url]{URL 1}

\author{Simon Telen}
\address{Department of Computer Science, KU Leuven, Heverlee, B-3001, Belgium}
\ead{simon.telen@cs.kuleuven.be}
%\ead[url]{URL 2}

\author{Marc Van Barel\thanks{Supported by
  the Research Council KU Leuven,
PF/10/002 (Optimization in Engineering Center (OPTEC)),
C1-project (Numerical Linear Algebra and Polynomial Computations),
by
the Fund for Scientific Research--Flanders (Belgium),
G.0828.14N (Multivariate polynomial and rational interpolation and approximation),
and
EOS Project 30468160
(Structured Low-Rank Matrix/Tensor Approximation: Numerical Optimization-Based Algorithms and Applications),
   and by
   the Interuniversity Attraction Poles Programme, initiated by the Belgian State,  Science Policy Office,
   Belgian Network DYSCO (Dynamical Systems, Control, and Optimization).
}}
\address{Department of Computer Science, KU Leuven, Heverlee, B-3001, Belgium}
\ead{marc.vanbarel@cs.kuleuven.be}

\begin{abstract}
We consider the problem of finding the isolated common roots of a set of polynomial functions defining a zero-dimensional ideal $I$ in a ring $R$ of polynomials over $\CC$. Normal form algorithms provide an algebraic approach to solve this problem.
The framework presented in \cite{telen2018solving} uses truncated
normal forms (TNFs) to compute the algebra structure of $R/I$ and the
solutions of $I$. This framework allows for the use of much more
general bases than the standard monomials for $R/I$. This is exploited
in this paper to introduce the use of two special (non-monomial) types
of basis functions with nice properties. This allows, for instance, to
adapt the basis functions to the expected location of the roots of
$I$. We also propose algorithms for efficient computation of TNFs and
a generalization of the construction of TNFs in the case of
non-generic zero-dimensional systems. The potential of the TNF method
and usefulness of the new results are exposed by many experiments.
\end{abstract}

\begin{keyword}
polynomial systems, truncated normal forms, computational algebraic geometry, orthogonal polynomials
\end{keyword}
\end{frontmatter}

\section{Introduction}
Several problems in science and engineering boil down to the problem
of finding the common roots of a set of multivariate (Laurent)
polynomial equations. In mathematical terms, if $R = \C[x_1, \ldots,
x_n]$ is the ring of polynomials over $\C$ in the $n$
indeterminates $x_1, \ldots, x_n$ and $I = \langle f_1, \ldots, f_s
\rangle \subset R$ is the ideal generated by the polynomials $f_i \in R$, the problem can be formulated as finding the points in the algebraic set $\V(I) = \{ z \in \C^n : f_i(z) = 0, i = 1, \ldots,s \} = \{z \in \C^n : f(z) = 0, \forall f \in I \}$. If $\V(I)$ is finite, $I$ is called
\textit{zero-dimensional}. From now on, in this paper, $I$ is a zero-dimensional ideal. 

The most important techniques for polynomial system solving are homotopy continuation methods (\cite{bates2013numerically,verschelde1999algorithm}), subdivision methods (\cite{mourrain2009subdivision}) and algebraic methods (\cite{emiris_matrices_1999,mvb,cox2,dreesen2012back,mourrain1999new,stetter1996matrix}).
See for instance \cite{sturmfels2,cattani2005solving} for an overview. Algebraic methods can be traced back to B\'ezout, Sylvester, Cayley, Macaulay\dots. Among them are \textit{normal form methods}, which use rewriting techniques modulo $I$ to turn the problem into an eigenvalue, eigenvector problem, see \cite{cox2,elkadi_introduction_2007,telen2018stabilized,telen2018solving}. The key observation to translate the root finding problem into a linear algebra problem is a standard result in algebraic geometry: $R/I$ is finitely generated over $\C$ as a $\C$-algebra (it is a finite dimensional $\C$-vector space with a compatible ring structure) if and only if $I$ is zero-dimensional. Moreover, $\dim_\C(R/I) = \D$, where $\D$ is the number of points defined by $I$, counting multiplicities. See for instance \cite[Chapter 5, \S3, Theorem 6]{cox1}. The map $M_f : R/I \rightarrow R/I: g + I \mapsto fg + I$, representing `multiplication by $f+I$' in $R/I$ is linear. Fixing a basis for $R/I$, $M_f$ is a $\D \times \D$ matrix. A well known result is that the eigenvalue structure of such \textit{multiplication matrices} reveals the coordinates of the points in $\V(I)$, see \cite{elkadi_introduction_2007,cox2,stetter1996matrix}. 

In general, normal form algorithms execute the following two main steps. 
\begin{enumerate}
\item Compute the multiplication matrices $M_{x_1}, \ldots, M_{x_n}$ with respect to a suitable basis of $R/I$.
\item Compute the points $\V(I)$ from the eigenvalue structure of these matrices.
\end{enumerate}
We will now focus on step (1). Once a basis $\B = \{b_1 +I ,
\ldots, b_\D + I \}$ of $R/I$ is fixed, the $i$-th column of $M_{x_j}$
corresponds to the coordinates of $x_jb_i + I$ in $\B$. These
coordinates are found by projecting $x_jb_i$ onto $B =
\Span(b_1, \ldots, b_\D)$ along $I$. A well-known method to compute
this projection map uses Groebner bases with respect to a certain
monomial ordering (\cite{cox1,cox2}). The resulting basis consists of
the monomials not in the initial of the ideal $I$.
The monomial basis is sensitive to perturbations of the input
coefficients. Also, Groebner basis computations are known to be
unstable and hence unfeasible for finite precision arithmetic. Border
bases are more flexible: they are not restricted to monomial orders. This makes border basis techniques more robust and potentially more efficient
(\cite{mourrain1999new,mourrain2007pythagore,mourrain_generalized_2005,mourrain_stable_2008}). In \cite{telen2018stabilized} it is shown that the choice of basis for $R/I$ can be crucial for the accuracy of the computed multiplication maps and a `heuristically optimal' monomial basis $\B$ is chosen using column pivoted QR factorization on a large Macaulay-type matrix for solving generic dense problems. Motivated by this, in \cite{telen2018solving} a general algebraic framework is proposed for constructing so called \textit{truncated normal forms} with respect to a numerically justified basis for $R/I$. By using the same QR operation to select $\B$, the resulting bases consist of monomials. 

In this paper, we present an extension of the TNF method described in
\cite{telen2018stabilized,telen2018solving} to solve polynomial systems which are zero
dimensional but not necessarily generic for a resultant construction.
We describe techniques which reduce significantly the computational complexity of computing a TNF.
% We describe the corresponding algorithm for constructing a TNF when the ideal $I$ is zero-dimensional, yet the ideal defined by
% the homogenized equations has (possibly infinitely many) roots at
% infinity.
% Finally, we also work out some ideas to make the TNF construction more efficient. \\
We exploit the fact that the approach allows much more
general constructions. We investigate the use of non-monomial bases to
represent truncated normal forms. Although using monomial bases for
$R/I$ is standard, we argue that this is not always the most natural
choice. For instance, if many of the points in $\V(I)$ are expected to
be real, or we wish to know the real roots with high accuracy,
Chebyshev polynomials prove to be good candidates. Another argument
comes from the fact that a TNF on a finite dimensional vector space $V
\subset R$ is a projector along $I \cap V$ onto $V/(I \cap V) \simeq
R/I$. We show that replacing the column pivoted QR factorization in
the algorithms of \cite{telen2018stabilized,telen2018solving} by an
SVD, the resulting TNF is in fact an orthogonal projection from the
subspace $W = \{f \in V: x_i f \in V, i = 1, \ldots, n \}$ onto
$R/I$. The resulting basis $\B$ for $R/I$ no longer consists of
monomials in this case.

In the next section, we discuss TNFs and summarize some results from
\cite{telen2018solving} that are relevant for this work.
In Section \ref{sec:nongeneric}, we present a new 
algorithm for solving a non-generic system, using the TNF
construction.
In Section \ref{sec:effconstr}, we present methods to reduce the
computational complexity of computing a TNF.
In Section \ref{sec:bases} we discuss TNFs constructed in non-monomial bases. We
consider in particular bases obtained by using the SVD as an
alternative for QR and orthogonal polynomial bases such as the
Chebyshev basis for the construction of the resultant map.
Finally, in Section \ref{sec:numexps}, we show some experiments with the intention of illustrating the new results of this paper, but also of convincing the reader that the TNF algorithm is competitive with existing solvers in general. \\

\section{Truncated normal forms} \label{sec:TNF}
In this section, we briefly review the definitions and results from \cite{telen2018solving} that are relevant for this paper. As in the introduction, denote $R = \C[x_1,\ldots, x_n]$ and
take an ideal $I = \ideal{f_1, \ldots, f_s} \subset R$ defining $\D < \infty$ points, counting multiplicities. This is equivalent to the assumption that $\dim_{\C}(R/I)=\delta < \infty$. A \textit{normal form} is a map characterized by the following properties. 
\begin{definition}[Normal form]
A \textup{normal form} on $R$ w.r.t.~$I$ is a linear map $\N: R \rightarrow B$ where $B \subset R$ is a vector subspace of dimension $\D$ over $\C$ such that the sequence
\begin{center}
\begin{tikzcd}[column sep = scriptsize, row sep = 0.2 cm]
    0 \arrow{r} & I \arrow{r} & R \arrow{r}{\N} & B \arrow{r} & 0\\
\end{tikzcd}
\end{center}
is exact and $\N_{|B} = \id_B$.
\end{definition}
From this definition it follows that multiplication with $x_i$ in $B$ is given by $M_{x_i} : B \rightarrow B : b \mapsto \N(x_ib)$. A truncated normal form is a restricted version of a normal form.
\begin{definition}[Truncated normal form] \label{def:normalform}
Let $B \subset V \subset R$ with $B,V$ finite dimensional vector
subspaces, $x_i \cdot B \subset V, i = 1, \ldots, n$ and $\dim_\C(B) =
\D= \dim_\C(R/I)$. A \textup{Truncated Normal Form (TNF)} on $V$
w.r.t. $I$ is a linear map $\N: V \rightarrow B$ such that $\N$ is the
restriction to $V$ of a normal form w.r.t. $I$. That is, the sequence
\begin{center}
\begin{tikzcd}[column sep = scriptsize, row sep = 0.2 cm]
    0 \arrow{r} & I \cap V \arrow{r} & V \arrow{r}{\N} & B \arrow{r} & 0\\
\end{tikzcd}
\end{center}
is exact and $\N_{|B} = \id_B$.
\end{definition}
The constructions of TNFs proposed in \cite{telen2018solving} work in two steps: a TNF is computed from a map $N: V \rightarrow \C^\D$ which in turn is computed directly from the input equations using linear algebra techniques. 
\begin{definition}
If $\N: V \rightarrow B$ is a TNF and $N = P \circ \N$ for some isomorphism $P : B \rightarrow \C^\D$, we say that $N$ \textup{covers} the TNF $\N$.
\end{definition}

If $N:V \rightarrow \C^\D$ covers a TNF $\N: V \rightarrow B$, the
above discussion suggests that $P = N_{|B}$ and $\N = (N_{|B})^{-1}
\circ N = P^{-1} \circ N$ for some $B \subset V$ and that $M_{x_i} =
(N_{|B})^{-1} \circ N_i= P^{-1} \circ N_{i}$ with $N_i = N_{|x_i \cdot B}$. The following is an immediate corrolary of Theorem 3.1 in \cite{telen2018solving}.
\begin{theorem} \label{thm:mainthm}
If $N:V \rightarrow \C^\D$ covers a TNF on $V$ w.r.t.\ $I$, then for any $\D$-dimensional $B \subset W = \{ f \in V: x_i f \in V, i = 1, \ldots, n \}$ such that $N_{|B}$ is invertible, $\N = (N_{|B})^{-1} \circ N : V \rightarrow B$ is a TNF on $V$ w.r.t.\ $I$.
\end{theorem}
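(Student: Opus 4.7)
The plan is to verify directly the four requirements of Definition \ref{def:normalform} for the map $\N = (N_{|B})^{-1} \circ N : V \rightarrow B$: namely that $B \subset V$ has dimension $\D$ with $x_i \cdot B \subset V$, that $\N_{|B} = \id_B$, and that the sequence $0 \to I \cap V \to V \xrightarrow{\N} B \to 0$ is exact (so that $\N$ extends to a normal form on $R$).

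First I would unpack the hypothesis that $N$ covers a TNF. By the definition of covering, there exists some TNF $\N_0 : V \to B_0$ and an isomorphism $P : B_0 \to \C^\D$ such that $N = P \circ \N_0$. Since $\N_0$ fits in an exact sequence $0 \to I \cap V \to V \to B_0 \to 0$, it is surjective with kernel $I \cap V$. Composing with the isomorphism $P$ yields $N(V) = \C^\D$ and $\ker N = I \cap V$. These two consequences are the only way the covering hypothesis enters the proof.

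Next I would dispense with the easy conditions. The inclusion $B \subset W$ immediately gives $B \subset V$ and $x_i \cdot B \subset V$ for every $i$, while $\dim_\C(B) = \D$ is part of the hypothesis. The identity $\N_{|B} = (N_{|B})^{-1} \circ N_{|B} = \id_B$ is then tautological.

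Finally, for the exact sequence I would use that $(N_{|B})^{-1}$ is a bijection $\C^\D \to B$: this gives $\ker \N = \ker N = I \cap V$ and $\N(V) = (N_{|B})^{-1}(N(V)) = (N_{|B})^{-1}(\C^\D) = B$, so $\N$ is surjective, producing the desired short exact sequence. I do not anticipate any serious obstacle; the only conceptual point is recognizing that the covering hypothesis is precisely what supplies surjectivity of $N$ together with the identification $\ker N = I \cap V$, after which the construction $\N = (N_{|B})^{-1} \circ N$ is just the composition of this ``abstract'' projection with a change of basis to the explicit subspace $B$.
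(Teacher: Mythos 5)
Your proof is correct. The paper itself does not include an argument here — it simply states the result as an immediate corollary of Theorem 3.1 in \cite{telen2018solving} — so there is no proof in the text to compare against, but your direct verification of the conditions in Definition~\ref{def:normalform} is the natural route and it is sound. You correctly extract from the covering hypothesis (via $N = P \circ \N_0$ with $\N_0$ a TNF and $P$ an isomorphism) that $N$ is surjective with $\ker N = I \cap V$; the inclusion $B \subset W$ supplies $B \subset V$, $x_i \cdot B \subset V$; invertibility of $N_{|B}$ gives $\N_{|B} = \id_B$ tautologically, $\ker \N = \ker N = I \cap V$, and $\N(V) = (N_{|B})^{-1}(\C^\D) = B$, so the required short exact sequence holds. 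The one point you lean on implicitly is the equivalence, asserted in the ``That is'' clause of Definition~\ref{def:normalform}, between being the restriction of a normal form and the exact-sequence-plus-identity characterization; that equivalence does require $\dim_\C B = \D = \dim_\C(R/I)$ (it forces $V + I = R$, which is what allows the extension to all of $R$), and since $\dim_\C B = \D$ is part of your hypotheses the appeal is legitimate, but it would be worth making that dependence explicit rather than treating the characterization as free.
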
 
For a subpace $L \subset R$, denote $L^+ = \Span_\C \{f,x_1 f, \ldots, x_n f ~|~ f \in L \} \subset R$. The vector space $W\subset V$ in the theorem is the maximal subspace $W\subset V$
such that $W^{+}\subset V$.
Theorem \ref{thm:mainthm} leads to the following method for finding the $\delta$ roots of
an ideal $I$ (counted with multiplicity) from
a TNF (see \cite{telen2018solving} for more details).
\begin{center}
\begin{minipage}{12cm}
\begin{algorithmic}
  \State $V \gets $ a finite-dimensional subspace of $R$
\State $W \gets $ the maximal subspace of $V$ such that $W^{+}\subset V$
\State $N \gets $ the matrix of a map $N: V\rightarrow \C^{\delta}$
such that $\ker N \subset I \cap V$
\State $N_{|W} \gets \textup{columns of $N$ corresponding to the restriction of $N$ to $W$}$ 
\If{$N_{|W}$ surjective}
\State $N_{|B} \gets \textup{columns of $N_{|W}$ corresponding to an invertible submatrix}$ \label{basischoice}
\State $\OO \gets \textup{monomials corresponding to the columns of $N_{|B}$}$ 
\For{$i=1,\ldots,n$}
\State $N_i \gets \textup{columns of $N$ corresponding to $x_i \cdot \OO$}$
\State $M_{x_i} \gets (N_{|B})^{-1}N_i$
\EndFor
\State $\Xi \gets$ the roots of $I$ deduced from the tables of
multiplication $M_{x_1},\ldots, M_{x_n}$
\EndIf{}
\end{algorithmic}
\end{minipage}
\end{center} 
The construction of $N$ can be based on resultant maps (see Section \ref{sec:effconstr}).
It is shown in \cite{telen2018solving}, that for generic systems the
cokernel map $N$ of an appropriate resultant map covers a TNF.

\section{Solving non-generic systems}\label{sec:nongeneric}

%The procedure described in the previous section may not lead, in the non-generic case, to
%a map $N: V \rightarrow \C^{\delta}$ such that $N_{|W}$ is surjective. 
In \cite{telen2018solving} it is proved that a surjective map $N : V \rightarrow \C^\D$ covers a TNF if and only if the following conditions are satisfied: 
\begin{enumerate}[label=(\alph*)]
\item $\exists u \in V$ such that $u + I$ is a unit in $R/I$, 
\item $\ker N \subset I \cap V$, 
\item $N_{|W} : W \rightarrow \C^{\D}$ is surjective
\end{enumerate}
with $W$ as in Theorem \ref{thm:mainthm}. In general, it is fairly easy to construct a map $N$ that satisfies the conditions (a) and (b) from a given set of generators of $I$ (as the cokernel of a resultant map, see Section \ref{sec:effconstr}). In the generic case, it is known how to pick $V$ and construct $N$ such that also condition (c) is satisfied (see \cite{telen2018solving}). In the non-generic case, where $I$ is zero-dimensional but $I$ defines (possibly infinitely many) points at infinity, this is more tricky. In this section we discuss how the roots of $I$ may be retrieved in this situation from a map $N$ that satisfies only conditions (a) and (b).
%In this section we describe a way to handle this problem in order to recover the roots of the polynomial system. 
In what follows, for a $\C$-vector space $V$ we denote by $V^{*}$ the dual space of linear forms from $V$ to $\C$ and for $\nu \in V^*, f \in V$ we denote $\vspan{\nu,f} = \nu(f)$ for the dual pairing. For a subspace $W \subset V$, let $$
W^{\perp}= \{ \nu \in V^{*} \mid \forall f \in W, \vspan{\nu, f}=0\} \subset V^*.
$$ For a matrix $M$, $M^t$ denotes the transpose.

We consider a polynomial system given by $\fb = (f_{1},\ldots, f_{s}) \in R^{s}$ and the associated zero-dimensional ideal $I=\ideal{f_{1},\ldots,f_{s}} \subset R$. %Suppose that we have a resultant map onto some vector space $V$ with a cokernel $N : V \rightarrow \C^\D$. Furthermore, suppose that $\dim_\C R/I = \delta' \le \delta$ and
Suppose that we have a map $N : V \rightarrow \C^\D$ satisfying conditions (a) and (b). Furthermore, suppose that $\dim_\C (R/I) = \delta' \le \delta$ and
$I$ defines $r\le \delta'$ distinct roots $\xi_{1},\ldots, \xi_{r} \in \CC^{n}$. We have $r=\delta'$ if and only if all the roots of $I$ are simple.
Given a basis $\VV$ of $V$, a dual vector $\nu \in V^{*}$ is represented
by a vector $(\vspan{\nu, b})_{b\in \VV}$ in the dual basis of $\VV$. If $A \subset \mathbb{N}^n$ is a finite subset of cardinality $\dim_\C(V)$ and $V$ has a monomial basis $\VV=\{ x^{\alpha} \}_{\alpha \in A}$ (where $x^{\alpha}=x_{1}^{\alpha_{1}}\cdots x_{n}^{\alpha_{n}}$) then
the vector representing an element $\nu \in V^{*}$ in the dual basis is
$(\vspan{\nu, x^{\alpha}})_{\alpha\in A}$.

When $R/I$ is of finite dimension $\delta'$ over $\C$, the
vector space $I^{\perp} \subset R^*$
which can be identified with $(R/I)^{*}$, is of dimension $\delta'$ over $\C$. It contains the evaluations $\eval_{\xi_{i}}: f \mapsto f(\xi_{i})$ at the roots $\xi_{i}$ of $I$. These linear functionals are linearly independent since the points are distinct so that one can construct an associated interpolation polynomial family. We denote by $I^{\perp}_{|V}$ the restriction of elements of $I^{\perp}\subset R^{*}$ to $V$.

Let $J=\ker N$. By construction,
$J\subset I\cap V$.
%Let $W \subset V$ be as in Theorem \ref{thm:mainthm}.
The map $N: V \rightarrow \C^{\delta}$ is constructed from a basis of $J^{\perp} \subset V^*$. That is, each row of a matrix associated to $N$ represents an element of $V^{*}$, which vanishes on $J$. As $J\subset I\cap V$, we have $I^{\perp}_{|V}\subset J^{\perp}$.

%As in the previous section, the roots will be computed from eigenpairs. We introduce the notion of eigenvectors of a general pencil of matrices (see \cite{MourrainComputingIsolatedRoots1998} for more details).
%\begin{definition} Let $M_{0}, M_{1}, \ldots, M_{n} \in \C^{m_{1}\times m_{2}}$ be matrices of size $m_{1}\times m_{2}$. An eigenvector of the generalized pencil $[M_{0},M_{1}, \ldots, M_{n}]$ is a vector $v \in \C^{m_{2}}$ such that there exist $\lambda_{1}, \ldots, \lambda_{n}\in \C$ with
%$$ 
%M_{i} v = \lambda_{i} M_{0} v,\ \ i=1,\ldots, n\quad \textup{ and }\quad M_{0} v \neq 0.
%$$
%\end{definition}
%In order to write the roots of $I$ as eigenvalues of a generalized pencil we will make the following assumption, which is equivalent to saying that $W$ contains $\{w_1, \ldots, w_{\D'} \}$ such that $\{w_1 + I, \ldots, w_{\D'} + I \}$ is a basis of $R/I$.
%\begin{equation} \label{assum}
%\textbf{Assumption: } \dim_\C (I^{\perp}_{|W}) = %\delta' \textup{ where } \delta' = \dim_\C R/I.
%\end{equation}
In order to recover the roots as eigenvalues, we will work with restrictions of $N$ to subspaces of $V$. If $L \subset V$ is such a subspace, we denote $r_{L} = \dim_\C(\im N_{|L}) = \rank(N_{|L}) \leq \D$. Let $W' \subset V' \subset V$ be subspaces satisfying
\begin{enumerate}
\item $\dim_\C(I_{|W'}^\perp) = \D' = \dim_\C(R/I)$, 
\item $(W')^+ \subset V'$, 
\item $r_{W'} = r_{V'}$.
  % \item $r_{V'} \leq \dim_\C(W')$,
%\item $\dim_\C(B') = r_{V'}$,
\end{enumerate}
Note that the first condition is equivalent to saying that $W'$ contains $w_1, \ldots, w_{\D'}$ such that $\{w_1 +I, \ldots, w_{\D'} +I \}$ is a basis for $R/I$.
% This means that condition 1 implies condition (a) above.
Because of condition 1 and $I_{|W'}^\perp \subset \im N_{|W'}^t$, we have a chain of inequalities $r \leq \D' \leq r_{W'} = r_{V'} \leq \D$.
%A subspace $B' \subset W'$ satisfying conditions 4 and 5 exists if and only if condition 3 is satisfied.
In what follows, with a slight abuse of notation, $N_{|L}$ is a matrix of the linear map $N_{|L}$ with respect to any basis of $L$.
% From basic linear algebra we know that there is an invertible matrix $U \in \GL(\D,\C)$ such that $N_{|V'}^t U = [ \tilde{N}_{|V'}^t ~ 0 ]$ is such that the last but $r_{V'}$ columns are zero. We use the notation 
% $$ N_0^t = [ \tilde{N}_0^t ~ 0 ] = N_{|B'}^t U, \qquad  N_j^t = [ \tilde{N}_j^t ~ 0 ] = N_{|x_j \cdot B'}^t U.$$
% Note that $\tilde{N}_j^t \in \C^{r_{V'} \times r_{V'}}, j = 0, \ldots, n$ are square matrices.
We are now ready to state the main result of this section.
%\begin{theorem} \label{thm:nongeneric}
%Let $ B_{0}\subset W$ such that $dim_\C(B_{0})= \rank(N_{|W})= \rank(N_{|B_{0}})$. Let $N_{0}= N_{|B_{0}}$, $N_{i}=N_{|x_{i}\cdot B_{0}}$ $i=1,\ldots,n$.
%The set of eigenvectors of the generalized pencil $[N^{t}_{0},N_{1}^{t},\ldots,N_{n}^{t}]$ contains vectors $w_{1}, \ldots, w_{r}$ such that for $i=1,\ldots,r$,  $N^{t}w_{i}$ is the evaluation $\eval_{\xi_{i}}$ at the root $\xi_{i}$ restricted to $V$. Furthermore, the corresponding eigenvalues are the coordinates of $\xi_i$.
%\end{theorem}
\begin{theorem} \label{thm:nongeneric}
Let $N: V \rightarrow \C^\D$ be surjective with $\ker N \subset I \cap V$. Let $W' \subset V' \subset V$ satisfy conditions 1-3 above. Let $B'\subset W'$ such that $\dim_{\C} B'= r_{B'} = r_{W'}$ and let $N_{0}=N_{|B'}$, $N_{j}= N_{|x_{j}\cdot B'}$.
There are nonzero vectors $v_i \in \C^{\D} \setminus \{0\}, i = 1, \ldots, r$ satisfying ${N}_j^t v_i = \xi_{i,j} {N}_0^t v_i$, where $\xi_{i,j}$ is the $j$-th coordinate of the root $\xi_i$ of $I$, such that $\rank(N_{j}^{t}-\xi_{i,j} N_{0}^{t})< r_{B'}$.
\end{theorem}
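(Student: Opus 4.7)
The plan is to realise each $v_i$ as the $\C^\delta$-representative of the evaluation functional at the root $\xi_i$. Since $\ker N \subset I \cap V$, every $f \in \ker N$ satisfies $f(\xi_i) = 0$, so $\eval_{\xi_i}|_V \in V^*$ vanishes on $\ker N$ and therefore belongs to $(\ker N)^\perp = \im N^t$. Because $N$ is surjective, $N^t$ is injective, so there is a unique $v_i \in \C^\delta$ with $N^t v_i = \eval_{\xi_i}|_V$, that is
\[
v_i^t\, N(f) = f(\xi_i) \quad \text{for every } f \in V.
\]
Condition 1 says that the restriction map $I^\perp \to (W')^*$ is injective; since $\eval_{\xi_1},\ldots,\eval_{\xi_r}$ are linearly independent elements of $I^\perp$ (the $\xi_i$ being distinct), each restriction $\eval_{\xi_i}|_{W'}$ is nonzero. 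Because $W' \subset V$, this forces $v_i \neq 0$.

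Next I would derive the generalized eigenvalue equation. For any basis element $b_\ell$ of $B'$, both $b_\ell$ and $x_j b_\ell$ lie in $V$ (using $B' \subset W'$ and $(W')^+ \subset V' \subset V$), so the identity above applies to both, yielding
\[
(N_j^t v_i)_\ell = v_i^t\, N(x_j b_\ell) = (x_j b_\ell)(\xi_i) = \xi_{i,j}\, b_\ell(\xi_i) = \xi_{i,j}\,(N_0^t v_i)_\ell.
\]
Since $v_i \neq 0$, this already exhibits a nontrivial right null vector of $N_j^t - \xi_{i,j} N_0^t$.

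For the sharper rank claim I would pass to the transpose $N_j - \xi_{i,j} N_0$, of size $\delta \times r_{B'}$, whose $\ell$-th column is $N\!\bigl((x_j - \xi_{i,j}) b_\ell\bigr)$. Condition 2 guarantees $(x_j - \xi_{i,j}) B' \subset V'$, so every column lies in $N(V') \subset \C^\delta$, a subspace of dimension $r_{V'} = r_{W'} = r_{B'}$ by condition 3 and the hypothesis on $B'$. Because $\ker N \subset I$, the rule $\eta(N(f)) = f(\xi_i)$ defines an unambiguous linear functional on $N(V')$, and the argument above shows $\eta \neq 0$. On the other hand $\eta$ kills each column:
\[
\eta\bigl(N((x_j - \xi_{i,j}) b_\ell)\bigr) = (\xi_{i,j} - \xi_{i,j})\, b_\ell(\xi_i) = 0.
\]
Hence the column space of $N_j - \xi_{i,j} N_0$ is strictly contained in $N(V')$, giving column rank at most $r_{B'} - 1$; since row rank equals column rank, $\rank(N_j^t - \xi_{i,j} N_0^t) < r_{B'}$.

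The main obstacle is not any individual calculation but the bookkeeping of which ambient subspace ($V$, $V'$, $W'$, $B'$) each object must sit in: condition 2 is what allows one to treat $x_j b_\ell$ and $(x_j - \xi_{i,j}) b_\ell$ as elements of $V'$, condition 3 combined with the hypothesis on $B'$ is what forces $\dim N(V') = r_{B'}$, and condition 1 is used twice, to make both $v_i$ and $\eta$ nonzero. Once these are in place the rest is a one-line manipulation of $\eval_{\xi_i}(x_j b_\ell) = \xi_{i,j}\, b_\ell(\xi_i)$.
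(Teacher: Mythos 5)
Your proof is correct. You construct the same $v_i$ at heart as the paper does — namely, the vector representing the evaluation $\eval_{\xi_i}$ through $N^t$ — and the eigenvector relation $N_j^t v_i = \xi_{i,j}\,N_0^t v_i$ is derived in both proofs from the identity $\eval_{\xi_i}(x_j b) = \xi_{i,j}\,\eval_{\xi_i}(b)$, which is the crux. The place where your argument genuinely diverges from the paper's is the rank bound. The paper constructs an explicit change-of-basis matrix $U\in\GL(\delta,\C)$ with the property that $(N_j^t-\xi_{i,j}N_0^t)U = [\tilde N_j^t-\xi_{i,j}\tilde N_0^t \;\; 0]$, where the first $r_{V'}$ columns form an $r_{B'}\times r_{V'}$ square block that visibly has a zero column (the $i$-th, since $(N_j^t-\xi_{i,j}N_0^t)v_i=0$), and then uses invertibility of $U$ to transfer the rank statement. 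You instead bound the column space of $N_j-\xi_{i,j}N_0$ directly: condition 2 forces every column $N((x_j-\xi_{i,j})b_\ell)$ into $N(V')$, condition 3 plus the hypothesis on $B'$ gives $\dim N(V')=r_{B'}$, and the well-defined nonzero functional $\eta$ on $N(V')$ with $\eta(N(f))=f(\xi_i)$ kills all columns, so they lie in a hyperplane of $N(V')$. Your version is more conceptual and avoids the explicit construction of $U$ (whose existence the paper must argue separately from $I^\perp_{|V'}\subset\im N_{|V'}^t$ and a basis extension), while the paper's version makes the square regular pencil $\tilde N_j^t-\lambda\tilde N_0^t$ explicit, which is what Algorithm~\ref{alg:non-generic} actually computes with. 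Both are complete; the exchange of ideas here is a coordinate-free hyperplane argument versus a coordinate construction that exposes a zero column.
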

\begin{proof}
As $\dim_\C(I^{\perp}) = \dim_\C(I^{\perp}_{|W})=\delta'$ by the first condition, the restriction of the linear functionals $\eval_{\xi_i}$ to $V'$ are linearly independent. Since $I^{\perp}_{|V'} \subset J^{\perp}= \im(N_{|V'}^{t})$, there exists an invertible matrix $U \in \GL(\delta,\C)$ such that the first $r$ columns of $N_{|V'}^t U$ represent the evaluations $\eval_{\xi_{1}}, \ldots, \eval_{\xi_{r}}$ at the roots restricted to $V'$ and the last but $r_{V'}$ columns of $N_{|V'}^t U$ are zero. %(similarly the first $r$ columns of $N^t U$ represent the evaluations  $\eval_{\xi_{i}}$ restricted to $V$).  
That is,
% if $(U_0 \oplus \id)$ is the block diagonal matrix with $U_0$ and an identity matrix of size $\D - r_{V'}$ on its diagonal,
$N_{|V'}^t U$ looks like this: 
  $$
  N_{|V'}^t U = [\tilde{N}_{|V'}^{t} 0 ] =
\kbordermatrix{%
     & \leftarrow & r_{V'} &  &\rightarrow &  & \\
 & \vdots & \cdots & \vdots &\vdots&0 & \cdots & 0\\
& (\eval_{\xi_{1}})_{|V'}&\cdots& (\eval_{\xi_{r}})_{|V'}& \vdots &0 &\cdots & 0\\
     & \vdots & \cdots & \vdots &\vdots&0 & \cdots & 0\\
  }.
  $$
Let $B'\subset W'$ such that $\dim_{\C} B'= r_{B'} = r_{W'}$. We have 
$$ N_0^t U = N_{|B'}^t U = [ \tilde{N}_0^t ~ 0 ], \qquad  N_j^t = N_{|x_j \cdot B'}^t U = [ \tilde{N}_j^t ~ 0 ].$$
As $\dim_{\C} B'= r_{B'} = r_{W'}= r_{V'}$, the matrices $\tilde{N}_j^t \in \C^{r_{V'} \times r_{V'}}, j = 0, \ldots, n$ are square matrices and $\tilde{N}_{0}$ is invertible.
%Since $N_{|B_{0}}^{t} U$ is of rank $\rho_{0}=\dim_\C(B_{0})$ and its last but $\rho_{0}$ columns are zero, its first $\rho_{0}$ columns are linearly independent. In particular, its first $r$ columns representing the evaluation at $\xi_{1},\ldots,\xi_{r}$  are linearly independent. 
Let $\VV'$ be a basis of $V'$ used to compute the matrix of $N$ and let $v_i$ be the $i$-th column of $U$. Note that $v_i \neq 0$ since $U \in \GL(\D,\C)$. By construction, we have
$$ 
 {N}_{|V'}^{t}\, v_{i} = (\vspan{\eval_{\xi_{i}},b})_{b\in \VV'}= (b(\xi_{i}))_{b\in \VV'}, \quad i = 1, \ldots, r.
$$
Let $\B'$ be a basis of $B'$ indexing the rows of $\tilde{N}_0^t$ and $\tilde{N}^t_{j}$.
We have
$$ 
{N}_0^{t} v_{i} = (b(\xi_{i}))_{b\in \B'}, \quad
{N}_j^{t} v_{i} = (\xi_{i,j} b(\xi_{i}))_{b\in \B'} = \xi_{i,j}\, (b(\xi_{i}))_{b\in \B'}.
$$
As $\tilde{N}_j^{t} - \xi_{i,j} \tilde{N}_0^t$ has a zero column, its rank is strictly less than $r_{B'}$.
The theorem follows, since 
$$ \rank(N_j^t - \xi_{i,j} N_0^t) = \rank((N_j^t - \xi_{i,j} N_0^t)U) = \rank(\tilde{N}_j^{t} - \xi_{i,j} \tilde{N}_0^t).$$
\end{proof}
Let $U \in \GL(\D,\C)$ be any matrix such that the last but $r_{V'}$ columns of $N_{|V'}^t U$ are zero and let $B, N_j, j = 0, \ldots, n$ be as in Theorem \ref{thm:nongeneric}. By the theorem, the square pencils $\tilde{N}_j^t - \lambda \tilde{N}_0^t$, where $\tilde{N}_j^t$ contains the first $r_{W'}=r_{V'}$ columns of $N_j^t U$, are regular  and among their eigenvalues are $\lambda = \xi_{i,j}$. Such a matrix $U$ is for instance obtained from a QR factorization\footnote{We use bold capital letters for factor matrices in standard factorizations in linear algebra, so that we can use the usual letters (e.g.\ $\RR$ in QR, $\VVV$ in SVD, \ldots) without being inconsistent with the notation of this paper (e.g.\ $R, V, \ldots$).} with column pivoting of $N_{|W'}$: $N_{|W'} \P = \Q \RR$. This leads at the same time to a basis $\B'$ corresponding to the monomials selected by the first $r_{W'}=r_{V'}$ columns of $\P$. We will show in Section \ref{sec:bases} that alternatively, the singular value decomposition can be used. This leads to Algorithm \ref{alg:non-generic} for finding the roots of $I$.
\begin{algorithm}[ht!]
\caption{Computes the roots of a non-generic system}\label{alg:non-generic}
\begin{algorithmic}[1]
\Procedure{SolveNonGeneric}{$f_1,\ldots,f_s$}
\State $\Res \gets \textup{a resultant map from $V_1 \times \cdots \times V_s$ to $V$}$ \label{constmac}
\State $N \gets \textup{cokernel map of $\Res$}$ \label{nullspace}
\State $W',V' \gets \textup{Subspaces of $V$ satisfying conditions 1-3}$
\State $\Q,\RR,\P \gets \textup{QR-factorization with pivoting of }N_{|W'}$
\State $\B' \gets \textup{monomials corresponding to the first $r_{V'}$ columns of $\RR$}$
\State $\tilde{N}_0 \gets \textup{first $r_{V'}$ rows and columns of $\RR$}$
\For{$i=1,\ldots,n$}
\State $\tilde{N}_i \gets \textup{first $r_{V'}$ rows of the submatrix of $\RR$ with columns indexed by $x_i \cdot \B'$}$
\EndFor
%\State $\Xi \gets \textup{  eigenvalues of the generalized pencil $[\tilde{N}_{0}, \ldots, \tilde{N}_{n}]$}$ 
\State $\Xi \gets \textup{  roots of $I = \ideal{f_{1},\ldots, f_{s}}$ computed as eigenvalues from $(\tilde{N}_{0}, \ldots, \tilde{N}_{n})$}$ 
\State \textbf{return} $\Xi$
\EndProcedure
\end{algorithmic}
\end{algorithm}

The common eigenvectors of the pencils $\tilde{N}_j^t - \lambda \tilde{N}_0^t$ can be obtained  by  computing the generalized eigenvectors of a random combination of $\tilde{N}_{1}^t, \ldots, \tilde{N}_{n}^t$ and $\tilde{N}_{0}^t$, and by selecting those which are common to the pencils $\tilde{N}_j^t - \lambda \tilde{N}_0^t$ for $i=1,\ldots,n$.
%The roots of $I$ corresponding to the eigenvectors in $\Xi$ are obtained from the eigenvalues $\tilde{\xi}_{i}$ of the pencils $[\tilde{N}_{i}, \tilde{N}_{0}]$ satisfying $\tilde{N}_{i} \tilde{w} - \tilde{\xi}_{i} \tilde{N}_{0} w_{i} = 0$ and by selecting the points 
%$(\tilde{\xi}_{1}, \ldots, \tilde{\xi}_{n})$ which are roots of $I$. 
Since $\tilde{N}_0$ is invertible, the pencils are \textit{regular}. A standard QZ algorithm can be used to find the eigenpairs. 
%If this is not the case, there might be spurious eigenvalues at infinity (for which QZ also works fine) or the pencils might be singular. In the latter case, staircase-type algorithms can be used to extract the eigenvalues of the regular part (\cite{van1979computation}). 
To conclude this section, we briefly discuss some important cases in which Theorem \ref{thm:nongeneric} can be used. In what follows, $R_{\leq d}$ is the vector subspace of polynomials of degree at most $d$.
\begin{itemize}
\item In the generic case one can take $V' = V$ and $W' = W$ from the standard construction and $(\tilde{N}_0^t)^{-1} \tilde{N}_j^t$ is a matrix of the multiplication map $M_{x_j}$ (see \cite{telen2018solving} for more details).
\item In the case of finitely many solutions in projective space, 
%one can take $V = R_{\leq \rho +1 }$ where $\rho$ is the degree of regularity of the ideal generated by the homogenized input equations, and $V' = V, W' = W$. This will give eigenvalues at infinity corresponding to the actual solutions at infinity and is equivalent to the projective version of the TNF method described in \cite{telen2018solving}. Alternatively, 
let $\rho$ be the smallest number such that $\dim_\C(R_{\leq \rho + k}/I_{\leq \rho + k}) = \D'$ for $k \geq 0$ ($\rho$ is the \textit{degree of regularity} in the affine sense). Taking $V = R_{\leq \rho + 2}, V' = R_{\leq \rho + 1}, W' = R_{\leq \rho}$ gives a regular pencil with only finite eigenvalues corresponding to the solutions $\xi_i$. Note that in this case $r_{V'} = \D'$ and $N_{|V'} : V' \rightarrow \C^{\D'}$ covers a TNF. It follows again that the $(\tilde{N}_0^t)^{-1} \tilde{N}_j^t$ are multiplication matrices. An alternative in this case is to use a random linear change of coordinates to apply the generic TNF construction, or to use Algorithm 3 in \cite{telen2018solving}, which computes also the points at infinity in their homogeneous coordinates. 
\item If there are positive dimensional solution sets at infinity, a $\rho$ sufficiently large as in the previous bullet also exists (For instance $rho$ larger than the degree of the relations describing a
Grobner basis for the graded reverse lexicographic ordering in terms of the polynomials $f_{i}$).
  An example is given in Subsection \ref{subsec:nongenericexp}.
\item Note that if $N$ is constructed from a resultant map with respect to an ideal $J \subset I$, then $\ker N \subset J \cap V \subset I \cap V$. This means that Algorithm \ref{alg:non-generic} can be used to find the isolated roots of ideals defining varieties with positive dimensional irreducible components. An example is given in Subsection \ref{subsec:nongenericexp}.
\end{itemize}

\section{Efficient construction of TNFs} \label{sec:effconstr}

An important step in the TNF method for solving polynomial systems is the computation of a map $N$ that covers a TNF. In some important cases, such a map can be obtained from a resultant map. We start this section with a brief description of how that works. 
\begin{definition}[Resultant map]
Let $\fb = (f_1, \ldots, f_s) \in R^{s}$. A \textup{resultant map} w.r.t.~$\fb$ is a map 
\begin{eqnarray*}
	 \Res : V_1 \times \cdots \times V_s \longrightarrow V :\quad(q_1, \ldots, q_s) \longmapsto  q_1f_1+ \cdots + q_sf_s. 
\end{eqnarray*}
with $V_i, V \subset R$ finite dimensional vector subspaces. 
\end{definition}
The \textit{cokernel} $(N, C)$ of a linear map $\Res$ consists of a linear map $N: V \rightarrow C$ and a $\C$-vector space $C$, unique up to isomorphism, such that $N \circ \Res = 0$ and any linear map $N': V \rightarrow C'$ satisfying $N' \circ \Res = 0$ factors through $N$. Clearly, $C \simeq V/ (\im \Res)$ and $N: V \rightarrow V/ (\im \Res)$ is the straightforward projection. In what follows, with a slight abuse of notation, by the cokernel of $\Res$ we mean the map $N$. In \cite{telen2018solving} it is shown how the cokernel of a specific resultant map covers a TNF in the following important cases.
\begin{enumerate}
\item When $s = n$ and the equations are dense of degree $d_i = \deg(f_i)$, $N$ is the cokernel of the resultant map defined by
 $$ V_i = R_{\leq \sum_{j \neq i} (d_j - 1)}, ~ V = R_{\leq \sum_{i=1}^n d_i - (n-1)}.$$
\item If $s = n$ and the equations are sparse and generic with respect to their Newton polytopes, $N$ is the cokernel of the resultant map defined as follows. Denote $P_i \subset \R^n$ for the Newton polytope of $f_i$ and let $v$ be a generic small real $n$-vector.
$$V_i = \bigoplus_{\alpha \in A_i} \C \cdot x^\alpha, \qquad V = \bigoplus_{\alpha \in A} \C \cdot x^\alpha  $$
with $A_i = (P_1 + \ldots + \hat{P}_i + \ldots + P_n + \Delta_n + v) \cap \Z^n$ ($\hat{\cdot}$ means this term is left out of the sum), $ A = (P_1 + \ldots + P_n + \Delta_n + v) \cap \Z^n$ and $\Delta_n$ the standard simplex.
\end{enumerate}
Similar constructions can be used to solve complete intersection in projective space or Segre varieties, and there are ways to deal with the case $s > n$ as well. We recall from Section \ref{sec:nongeneric} that in some cases the cokernel of a resultant map does not cover a TNF but it can still be used to compute the roots of $I$.
% If the homogenized equations define a homogeneous ideal with a positive dimensional solution set `at infinity', the projective version of the algorithm in \cite{telen2018solving} fails. We propose an algorithm for this case in Section 
% \ref{sec:nongeneric}. 

The TNF method for solving polynomial systems, like other algebraic approaches, has the important drawback that the complexity scales badly with the number $n$ of variables. This is due to the fact that the complexity of computing the cokernel map of the appropriate resultant map increases drastically with $n$.
We describe now two possible techniques to reduce this drastic increase of complexity. The first one computes the cokernel map degree by degree. 
This technique has also been exploited in \cite{batselier2014fast}.
The second one exploits the redundancy in the vector spaces $V_i$ in the definition of the resultant map.

\subsection{Computing the cokernel degree by degree} \label{subsec:degreebydegree}
We consider the case where $V = R_{\leq \rho}$ for some degree $\rho$ (of regularity). For instance, the square dense generic case or the overdetermined case with finitely many solutions in projective space. Let $I$ be generated by $f_1, \ldots, f_s$ with $d_i = \deg(f_i)$. We define the resultant maps
$$ \Res_k : V_{1,k} \times \cdots \times V_{s,k} \rightarrow V_k, \quad k = 1, \ldots, \rho$$
such that $V_k = R_{\leq k}$, $V_{i,k} = R_{\leq k - d_i}$ with the convention that $R_{\leq k} = \{0\}$ when $k <0$. Let $N_k: V_k \rightarrow \C^{\D_k}$ be the cokernel of $\Res_k$. We have that $\Res_\rho = \Res$ and $N_\rho = N$ is the map we want to compute. Our aim here is to compute $N_{k+1}$ from $N_k$ in an efficient way. Note that $V_k \subset V_{k+1}, V_{i,k} \subset V_{i,k+1}$. We write 
$$ \Res_{k+1} : S_{k} \times T_{k+1} \rightarrow V_{k+1}$$
where $S_{k} = V_{1,k} \times \cdots \times V_{s,k}$, $T_{k+1} \simeq \prod_{i=1}^s V_{i,k+1}/V_{i,k}$ and $(\Res_{k+1})_{|S_{k}} = \Res_k$. Define 
$$H_{k+1} = V_{k+1}/V_k, \quad \hat{N}_{k+1} : V_k \times H_{k+1} \rightarrow \C^{\D_k} \times H_{k+1}: (v,w) \mapsto (N_k(v),w).$$ 
Furthermore, set $\Res_{k+1}' = (\Res_{k+1})_{|T_{k+1}}$. Here is what the matrices look like: 
$$
  \hat{N}_{k+1} = \kbordermatrix{%
&   & V_k & & & H_{k+1} \\
\C^{\D_k} & & N_k & &\vrule&  0 \\
\cline{2-6}
H_{k+1}&& 0 & &\vrule& \id_{H_{k+1}} 
  }, \qquad 
  \Res_{k+1} = \kbordermatrix{%
&   & S_{k} & & & T_{k+1} \\
%&   &&&\vrule&\\
V_k & & \Res_k & &\vrule& A_{k+1} \\
%&& & &\vrule& \\
\cline{2-6}
H_{k+1}&& 0 & &\vrule& B_{k+1} 
  }.
$$
Finally, define $L_{k+1}: \C^{\D_k} \times T_{k+1} \rightarrow \C^{\D_{k+1}}$ as the cokernel of $\hat{N}_{k+1} \circ \Res_{k+1}'$ where $\Res'_{k+1}= \left[ \begin{array}{c}
   A_{k+1}\\
   B_{k+1}
\end{array}
\right]$. 
\begin{theorem}
The map $N_{k+1} = L_{k+1} \circ \hat{N}_{k+1}$ is the cokernel of $\Res_{k+1}$, i.e.\
\begin{equation*} \label{seq:G}
\begin{tikzcd}[column sep = 1.2cm, row sep = 0.2 cm] 
    \prod_{i=1}^s V_{i,k+1} \arrow{r}{\Res_{k+1}} & V_{k+1} \arrow{r}{L_{k+1} \circ \hat{N}_{k+1}} & \C^{\D_{k+1}} \arrow{r} & 0\\
\end{tikzcd}
\end{equation*} 
is exact.
\end{theorem}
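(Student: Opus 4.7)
The plan is to verify the two exactness conditions directly by unpacking the block structure that is already laid out in the statement. First I would fix the decomposition $V_{k+1} \cong V_k \oplus H_{k+1}$ (via any vector-space splitting of the quotient $H_{k+1} = V_{k+1}/V_k$) and $\prod_i V_{i,k+1} = S_k \oplus T_{k+1}$, so that $\Res_{k+1}$ and $\hat{N}_{k+1}$ really are the block matrices drawn in the excerpt. Then I would compute the composition
\[
  \hat{N}_{k+1} \circ \Res_{k+1}
  = \begin{pmatrix} N_k & 0 \\ 0 & \id_{H_{k+1}} \end{pmatrix}
    \begin{pmatrix} \Res_k & A_{k+1} \\ 0 & B_{k+1} \end{pmatrix}
  = \begin{pmatrix} 0 & N_k A_{k+1} \\ 0 & B_{k+1} \end{pmatrix},
\]
where the upper-left block vanishes because $N_k$ is, by induction, the cokernel of $\Res_k$. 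In particular $\im (\hat{N}_{k+1}\circ \Res_{k+1}) = \im (\hat{N}_{k+1}\circ \Res'_{k+1})$, which is exactly $\ker L_{k+1}$ by construction of $L_{k+1}$. Hence $L_{k+1}\circ \hat{N}_{k+1}\circ \Res_{k+1}=0$, giving one inclusion $\im \Res_{k+1} \subset \ker(L_{k+1}\circ \hat{N}_{k+1})$.

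For the reverse inclusion, take $v\in V_{k+1}$ with $L_{k+1}(\hat{N}_{k+1}(v))=0$. Then $\hat{N}_{k+1}(v)\in \ker L_{k+1} = \im(\hat{N}_{k+1}\circ \Res'_{k+1})$, so there is some $t\in T_{k+1}$ with $\hat{N}_{k+1}(v - \Res_{k+1}(0,t))=0$. The key observation is that $\ker \hat{N}_{k+1} = (\ker N_k)\oplus 0$; and since $N_k$ is a cokernel of $\Res_k$, we have $\ker N_k = \im \Res_k$. Thus there exists $s\in S_k$ with $v - \Res_{k+1}(0,t) = \Res_k(s) = \Res_{k+1}(s,0)$, so $v = \Res_{k+1}(s,t)\in \im \Res_{k+1}$. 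This establishes the equality $\ker(L_{k+1}\circ \hat{N}_{k+1}) = \im \Res_{k+1}$, hence exactness at $V_{k+1}$.

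Finally, exactness at $\C^{\delta_{k+1}}$ (surjectivity of $L_{k+1}\circ \hat{N}_{k+1}$) is automatic: $\hat{N}_{k+1}$ is surjective because $N_k$ is (the second component is the identity), and $L_{k+1}$ is surjective as any cokernel map. Composition of surjections is surjective. Combined with the previous step this shows that $L_{k+1}\circ \hat{N}_{k+1}$ satisfies the universal property of the cokernel of $\Res_{k+1}$.

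I do not expect a serious obstacle here; the proof is a diagram chase on explicit block matrices. The only point that requires a bit of care is the slight notational ambiguity about the codomain of $L_{k+1}$ in the statement (it should be read as the cokernel of $\hat{N}_{k+1}\circ \Res'_{k+1}\colon T_{k+1}\to \C^{\delta_k}\times H_{k+1}$), and the choice of a vector-space splitting to turn the quotient $V_{k+1}/V_k$ into a genuine direct summand so that the block-matrix computations are unambiguous.
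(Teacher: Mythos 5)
Your proof is correct, and it takes a genuinely different (though dual) route from the paper's. The paper dualizes everything: it identifies $\coker \Res_{k+1}$ with the left null space $\{(\sigma,\tau)\in V_{k+1}^* : \Res_{k+1}^t(\sigma,\tau)=0\}$, unpacks the block transpose to get the two conditions $\Res_k^t\sigma=0$ and $A_{k+1}^t\sigma+B_{k+1}^t\tau=0$, and then recognizes that space as $\hat{N}_{k+1}^t$ applied to the left null space of $\hat{N}_{k+1}\circ\Res'_{k+1}$, i.e.\ the row space of $L_{k+1}\circ\hat{N}_{k+1}$. You instead run a primal diagram chase: you verify $\ker(L_{k+1}\circ\hat{N}_{k+1})=\im\Res_{k+1}$ directly, using the two key facts $\ker\hat{N}_{k+1}=\ker N_k\oplus 0$ and $\ker L_{k+1}=\im(\hat{N}_{k+1}\circ\Res'_{k+1})$, and check surjectivity as a composition of surjections. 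Both arguments are about equally short; yours is perhaps the more natural one to a reader who thinks in terms of exact sequences of maps rather than annihilators, while the paper's is one line of linear-algebraic bookkeeping once one is comfortable with transposes. You also correctly flag and repair the typo in the stated domain of $L_{k+1}$ (it must be $\C^{\D_k}\times H_{k+1}$, not $\C^{\D_k}\times T_{k+1}$, for $L_{k+1}\circ\hat{N}_{k+1}$ to be well defined), which the paper's dual phrasing silently corrects as well.
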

\begin{proof}
By definition $\sigma \in \coker \Res_{k} \Leftrightarrow \Res_{k}^{t} \sigma=0 \Leftrightarrow \sigma = N_{k}^{t}(\omega)$ for some $\omega\in \C^{\delta_k}$.
Similarly $(\sigma, \tau) \in V_{k}^{*}\oplus H_{k+1}^{*}= V_{k+1}^{*}$ is in $\coker \Res_{k+1}$ if and only if
$$
\left\{\begin{array}{l}
  \Res_{k}^{t} \sigma=0, \quad \textup{i.e. }\  \sigma = N_{k}^{t}(\omega),\\
  A_{k+1}^{t} \sigma + B_{k+1}^{t} \tau = 0 =  A_{k+1}^{t} N_{k}^{t} \omega + B_{k+1} \tau.
\end{array}
\right.
$$
Equivalently, $(\sigma,\tau) = \hat{N}^{t}_{k+1}(\omega,\tau)$ with
$(\omega,\tau)$ in the cokernel $L_{k+1}$ of $\hat{N}_{k+1} \circ \Res_{k+1}'$.
The theorem follows.
%This follows immediately from Theorem 3.1 in \cite{batselier2014fast}.
\end{proof}
This means that if we have computed $N_k$, then we can compute $N_{k+1}$ by computing the cokernel $L_{k+1}$ of $\hat{N}_{k+1} \circ \Res_{k+1}'$ instead of $\Res_{k+1}$. This reduces the computational complexity significantly for $n>2$. We show some results in Subsection \ref{subsec:effconstrexp}.

\subsection{Reducing the size of $\Res$} \label{subsec:fewmultiples}
As explained above, a map $N$ covering a TNF is usually computed as the cokernel of a resultant map 
$$ \Res: V_1 \times \cdots \times V_n \rightarrow V.$$
The vector spaces $V_i$ and $V$ depend on the input equations. The definitions of the $V_i$ at the beginning of this section for the generic, square case are derived from the Macaulay and toric resultant matrix constructions (\cite{telen2018solving,cox2,emiris_matrices_1999}) and the close relation of $\Res$ to resultant matrices is the reason $\Res$ is called a resultant map.
(In resultant constructions an additional polynomial $f_{0}$ is usually involved.)
% In these matrix constructions, the monomial basis is typically used for the involved vector spaces and the resulting matrix represents a map 
% $$M : \tilde{V}_0 \times \tilde{V}_1 \times \cdots \times \tilde{V}_n \rightarrow V : (q_0, q_1, \ldots, q_n) \mapsto  q_0f_0 + q_1f_1 + \ldots q_nf_n$$
% where $V$ is as in our definitions, $f_0$ is a generic `extra' polynomial and $\tilde{V}_0= B$ is the subvector space of $V$ that is to be identified with $R/I$.
% For $n > 2$ the $\tilde{V}_i$ are strict subspaces of the $V_i, i = 1, \ldots n$, spanned by a carefully chosen set of monomials. These monomials define the monomial multiples of $f_i$ that are represented by the columns of the resultant matrix. Our $\Res$ matrix has the property that it has the same image as 
% $$ \tilde{M} = M_{|\tilde{V}_1 \times \cdots \times \tilde{V}_n} $$
% and hence the same cokernel, but it is larger: it has more columns. In fact, for large $n$, it can be \textit{much} larger. The reason is numerical stability: it turns out that the columns of $\tilde{M}$ may generically provide a `bad' basis of $I \cap V$. That is, it may have a large gap between its largest and its smallest singular value. For $\Res$, we observe that the ratio between the largest singular value and the smallest nonzero singular value is much smaller for large degrees. \\

In these resultant maps based on Macaulay and toric resultant constructions, there is a proper subspace of $V_1 \times \cdots \times V_n$ such that if we restrict $\Res$ to this subspace, it has the same image. Therefore $\Res$ is column rank deficient. However, in the generic case, we know that the rank of $\Res$ is $l - \D$ where $l = \dim_\C(V)$. This means that taking $l-\D$ random linear combinations of the columns of $\Res$ gives a matrix with the same rank and the same cokernel. This comes down to restricting $\Res$ to a random linear subspace of $V_1 \times \cdots \times V_n$, instead of the very specific one from the resultant matrix constructions (see \cite[Chapter 3]{cox2}). We may hope that this procedure results in better numerical behaviour, and the experiments in Subsection \ref{subsec:effconstrexp} show that it does. Let us denote $l_i = \dim_\C(V_i)$. By restricting to a random subspace of the right dimension, we reduce the number of columns of $\Res$ from $l_1 + \ldots + l_n$ to $l - \D$. 
%This reduction is important because for $n>2$ the computation time of the TNF algorithm is dominated by the cokernel computation (see \cite{telen2018solving}), so reducing the complexity of this step by a certain factor will reduce the overall computation time by approximately the same factor. 
To summarize: instead of computing the cokernel of $\Res \in \C^{l \times (l_1 + \ldots + l_n)}$, we compute the cokernel of the product $\Res C \in \C^{l \times (l - \D)}$ where $C \in \C^{(l_1 + \ldots + l_n) \times (l - \D)}$ is a matrix with random entries (for instance, real and drawn from a normal distribution with zero mean and $\sigma = 1$).

\begin{example} \label{ex:smallres}
In the case of a dense, square system defined by $n$ generic equations in $n$ variables, each of degree $d$, we have 
$$ l_i = \begin{pmatrix}
(n-1)d + 1 \\ (n-1)(d-1)
\end{pmatrix}, i = 1, \ldots, n, \quad l = \begin{pmatrix}
nd + 1 \\
n(d-1)+1
\end{pmatrix}, \quad \D = d^n$$
where $\begin{pmatrix}
h \\ k 
\end{pmatrix} = \frac{h!}{k!(h-k)!}$. The reduction in the number of columns is illustrated in Figure \ref{fig:nbrows}.
\begin{figure}
\centering
% This file was created by matlab2tikz.
%
%The latest updates can be retrieved from
%  http://www.mathworks.com/matlabcentral/fileexchange/22022-matlab2tikz-matlab2tikz
%where you can also make suggestions and rate matlab2tikz.
%
\definecolor{mycolor1}{rgb}{0.00000,0.44700,0.74100}%
\definecolor{mycolor2}{rgb}{0.85000,0.32500,0.09800}%
\definecolor{mycolor3}{rgb}{0.92900,0.69400,0.12500}%
\definecolor{mycolor4}{rgb}{0.49400,0.18400,0.55600}%
\definecolor{mycolor5}{rgb}{0.46600,0.67400,0.18800}%
\begin{tikzpicture}

\begin{axis}[%
width=3in,
height=1.5in,
at={(0.691in,0.275in)},
scale only axis,
xmin=2,
xmax=10,
xlabel = $d$,
ymin=1,
ylabel = $\frac{\sum l_i }{l - \D}$,
ylabel style={rotate=-90},
ymax=2.5,
axis background/.style={fill=white},
axis x line*=bottom,
axis y line*=left
]
\addplot [color=mycolor1, dashed, line width=2.0pt, mark=o, mark options={solid, mycolor1}, forget plot]
  table[row sep=crcr]{%
2	1.11111111111111\\
3	1.12903225806452\\
4	1.13513513513514\\
5	1.13793103448276\\
6	1.1394422310757\\
7	1.14035087719298\\
8	1.14093959731544\\
9	1.14134275618375\\
10	1.14163090128755\\
} node [pos=0.9, above left] {$n = 3$};
\addplot [color=mycolor2, dashed, line width=2.0pt, mark=o, mark options={solid, mycolor2}, forget plot]
  table[row sep=crcr]{%
2	1.27272727272727\\
3	1.32492113564669\\
4	1.34651600753296\\
5	1.35820895522388\\
6	1.36550995243967\\
7	1.37049180327869\\
8	1.37410384531827\\
9	1.37684083114787\\
10	1.37898542785143\\
} node [pos=0.9, above left] {$n = 4$};
\addplot [color=mycolor3, dashed, line width=2.0pt, mark=o, mark options={solid, mycolor3}, forget plot]
  table[row sep=crcr]{%
2	1.46511627906977\\
3	1.56\\
4	1.60103492884864\\
5	1.62389274598995\\
6	1.63844944028125\\
7	1.64852783986007\\
8	1.65591727948872\\
9	1.66156643452606\\
10	1.66602491707647\\
} node [pos=0.9, above left] {$n = 5$};
\addplot [color=mycolor4, dashed, line width=2.0pt, mark=o, mark options={solid, mycolor4}, forget plot]
  table[row sep=crcr]{%
2	1.67796610169492\\
3	1.81979320531758\\
4	1.88194492612888\\
5	1.916836881952\\
6	1.93917374265186\\
7	1.95469696703605\\
8	1.96611083547253\\
9	1.97485599422649\\
10	1.98177024816997\\
} node [pos=0.9, above left] {$n = 6$};
\addplot [color=mycolor5, dashed, line width=2.0pt, mark=o, mark options={solid, mycolor5}, forget plot]
  table[row sep=crcr]{%
2	1.90455049944506\\
3	2.09504802295123\\
4	2.17878057182225\\
5	2.22587853880892\\
6	2.25607007323838\\
7	2.27707295122634\\
8	2.29252775541014\\
9	2.30437624384823\\
10	2.31374874330977\\
} node [pos=0.9, above left] {$n = 7$};
\end{axis}
\end{tikzpicture}%
\caption{The ratio $(l_1 + \ldots + l_n)/(l - \D)$ of the number of columns of $\Res$ and $\Res C$ for increasing values of $n = 3,4,5,6,7$ and degrees $d = 2, \ldots, 10$, in the context of Example \ref{ex:smallres}.}
\label{fig:nbrows}
\end{figure}
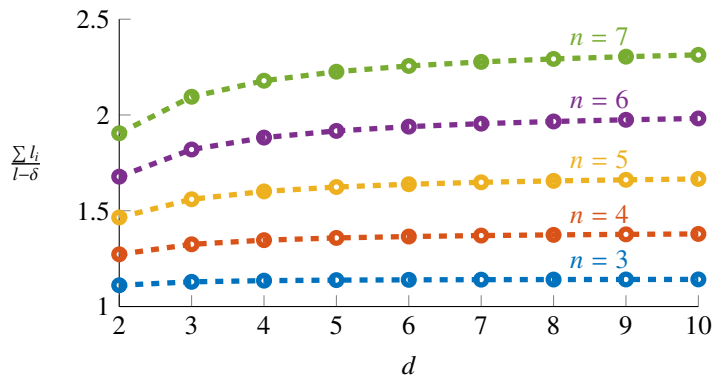
\end{example}

\section{TNFs in non-monomial bases} \label{sec:bases}
In this section, we deal with matrix representations of the linear maps from Section \ref{sec:TNF}: we fix bases for the involved vector spaces. For $\C^\D$, we will use the canonical basis $\{e_1, \ldots, e_\D \}$. We denote $\VV = \{v_1, \ldots, v_l \} \subset V$ for a basis of $V$ ($l = \dim_\C(V)$) and $\W = \{w_1, \ldots, w_m \} \subset W$, $m < l$ for a basis of $W = \{ f \in V : x_i f \in V, i = 1, \ldots, n \}$. Analogously, $\B = \{b_1, \ldots, b_\D \}$ is a basis for $B$. For simplicity, we assume $\W \subset \VV$. To simplify the notation we will make no distinction between a matrix and the abstract linear map it represents.

Suppose we have a map $N : V \rightarrow \C^\D$ which covers a TNF $\N: V \rightarrow B$ for some $B \subset W \subset V$. In practice, this means that we have a matrix representation of $N$ with respect to a fixed basis $\VV$ of $V$. Since $N$ is usually computed as the cokernel of a resultant map $\Res$, using for instance the SVD, the basis $\VV$ is usually induced by the basis used for $V$ to represent $\Res$. Note that since we are assuming $\W \subset \VV$, $N_{|W}: W \rightarrow \C^\D$ is just a $\D \times m$ submatrix of $N$ consisting of the columns indexed by $\W$. In this case we write $N_\W = N_{|W}$. To recover $\N$ from $N$, all that is left to do is compute the matrix $N_{|B} : B \rightarrow \C^\D$ with respect to a fixed basis $\B = \{b_1, \ldots, b_\D \}$ of $B \subset W$. Then the matrix of $\N$ with respect to the bases $\VV$ for $V$ and $\B$ for $B$ is $ \N = (N_{|B})^{-1} N$. Note that if $\B \subset \W$, the matrix $N_\B = N_{|B}$ consists of a subset of $\D$ columns of $N_{|W}$. Since $B \subset R$ is identified with $R/I$ in the TNF framework, the set $\B$ of basis elements represents a basis for $\{b_1 + I, \ldots, b_\D + I \}$ of $R/I$. Traditionally, e.g.\ in resultant and Groebner basis contexts, but often for border bases as well, the $b_i$ are monomials. In this section, we step away from this and show that it is sometimes natural to use non-monomial bases. The following three scenarios clearly lead to non-monomial bases of $R/I$. 
\begin{enumerate}
\item The set $\VV$ consists of monomials, but $B \subset W$ is computed using another procedure, such that $\B \not\subset \W$. An example is discussed in the first subsection, where we use a SVD of $N_{\W}$ to select $\B$ instead of a QR decomposition.
\item The set $\VV$ consists of non-monomial basis elements of $V$ and $\B \subset \W \subset \VV$. This happens, for instance, when $\B$ is chosen by performing a QR with optimal column pivoting on the matrix $N_{\W}$. The column pivoting comes down to a pivoting of the elements in $\W$, and $N_{|B}$ is simply a $\D \times \D$ submatrix $N_\B$ of $N_{\W}$. This situation is discussed in the second subsection for a specific type of basis functions.
\item It is straightforward to combine these first two scenarios, such that $\VV$ does not contain (only) monomials and $\B \not \subset \W$.
\end{enumerate}

\subsection{TNFs as orthogonal projectors} \label{subsec:svd}
% If $\VV = \{ x^\alpha : \alpha \in A \}$ consists of monomials corresponding to a finite set $A \subset \Z^n$ of lattice points, we can pick a subset $\W \subset \VV$ that spans $W$. Without loss of generality, we denote 
% \begin{eqnarray*}
% \VV &=& \{ x^{\alpha_1}, \ldots,x^{\alpha_m}, \ldots, x^{\alpha_l} \}, \\
% \W &=& \{x^{\alpha_1}, \ldots, x^{\alpha_m} \}.
% \end{eqnarray*}
% For the rest of the reasoning in this subsection, it is not important that $\VV$ and $\W$ consist of monomials and the reader may replace $x^{\alpha_i}$ by `the $i$-th basis function'. The sets $\VV$ and $\W$ may, for instance, consist of the basis functions discussed in the next subsection. We stick to the monomial case for now, as this is the most familiar situation and the general case follows easily from it. It follows from $\W \subset \VV$ that $N_{|W} = N_{\W}$ is a $\D \times m$ submatrix of $N$ consisting of the columns of $N$ indexed by the monomials in $\W$. The selection of a basis $\B \subset \W$ using QR with pivoting works as follows. We compute $N_{|W} \P = \Q \RR$ where $\P$ is an $m \times m$ pivot matrix, with block columns $\P = [ \P_1 ~ \P_2 ]$ such that $\P_1$, $\P_2$ are of size $m \times \D$ and $m \times (m - \D)$ respectively. If the $i$-th column of $N$ is indexed by $x^{\alpha_i}$, the resulting basis $\B$ is given by 
% $$ \B = [x^{\alpha_1} ~ \cdots ~ x^{\alpha_m}]~ \P_1$$
% and the corresponding subspace is $B = \Span(\B) \simeq \im \P_1$.

In the approach described in \cite{telen2018solving}, the selection of a basis $\OO$ (see Section \ref{sec:TNF}) is performed through a column pivoted QR factorization of $N_{|W}$.
We present an alternative basis selection using the singular value decomposition,which is another important tool from numerical linear algebra (\cite{trefethen1997numerical}).
This provides a basis $\OO$, which is not a monomial basis. Let $\VV = \{ x^\alpha : \alpha \in A \}$ be a set of monomials corresponding to a finite set $A \subset \Z^n$ of lattice points such that $\W = \{x^{\alpha_1}, \ldots, x^{\alpha_m} \} \subset \VV$ is a basis of $W$. 
We decompose
$$ N_{\W} = \U \SS \VVV^H$$
with $\cdot^H$ the Hermitian transpose. We split $\SS$ and $\VVV$ into compatibly sized block columns: 
$$ N_{\W} ~ [\VVV_1 ~ \VVV_2] = \U~ [\hat{\SS} ~~ 0 ] $$
with $\hat{\SS}$ diagonal and invertible ($N_{|W}$ is onto). In analogy with the QR case, we take 
$$\B = [x^{\alpha_1} ~ \cdots ~ x^{\alpha_m}]~ \VVV_1,$$
such that $B = \Span(\B) \simeq \im \VVV_1$. Therefore $$(N_{\W})_{|B} = N_{|B} = \U~ [\hat{\SS} ~~ 0 ] ~ [\VVV_1 ~ \VVV_2]^H ~ \VVV_1 = \U \hat{\SS}.$$
This tells us that the singular values of $N_{|B}$ are the singular values of $N_{\W}$ and $\N_{|W} = (N_{|B})^{-1}N_\W = \VVV_1^H$. Since $\ker N_{\W} = I \cap W \simeq \im \VVV_2 \subset \C^m$ and $\im \VVV_1 \perp \im \VVV_2$ by the properties of the SVD, we see that 
$$(I \cap W) \perp B$$
with respect to the standard inner product in $\C^m$ and using coordinates w.r.t\ $\W$. Equivalently, with this choice of $B$, $\N_{|W} = \VVV_1^H$ projects $W$ orthogonally onto $B$. The obtained basis $\B$ is an orthonormal basis for the orthogonal complement $B$ of $ I \cap W$ in $W$. This makes $B$ somehow a unique `canonical' representation of $R/I$ w.r.t.\ $\W$. Orthogonality is a favorable property for a projector, because the sensitivity of the image to perturbations of the input is minimal. Also, since $\N_{|W}(f) \perp (I \cap W), \forall f \in W$, $\lVert \N_{|W}(f) \rVert$ is a natural measure for the \textit{distance} of $f$ to the ideal in the basis $\W$, which is induced by the Euclidean distance in $\C^m$. We note that $\N$ does not project $V$ orthogonally onto $B$. In order to have an orthogonal projector $\N_{|W'} : W' \rightarrow B$, one must take $V$ large enough such that $W' \subset W \subset V$. Following this procedure, $\B$ is a non-monomial basis of $B$ (or $R/I$) consisting of $\D$ polynomials supported in $\W$. The above discussion shows that in some sense, $\B$ gives a `natural' basis for $R/I$, given the freedom of choice provided by Theorem \ref{thm:mainthm}. Unlike the QR algorithm, there are no heuristics involved. For the root finding problem, we observe that $\B_{\textup{SVD}}$ has the same good numerical properties as $\B_{\textup{QR}}$. We show some numerical examples in Section \ref{sec:numexps}.

\subsection{TNFs from function values} \label{subsec:cheb}
We consider the dense square case ($n =s$) here but the approach can be extended to other families of systems. Recall that in this case $V = R_{\leq \rho}$, $W = R_{< \rho}$ where $\rho = \sum_{i=1}^n d_i - (n-1)$. Let $\{\p_{n}(x)\}$ be a family of orthogonal univariate polynomials on an interval of $\R$, satisfying the recurrence relation $\p_0(x) = 1$, $\p_1(x) = a_0 x + b_0$ and
$$
\p_{n+1}(x) = (a_n x + b_n) \p_{n}(x) + c_n \p_{n-1}(x) 
$$
with $b_n, c_n \in \C$, $a_n \in \C \backslash \{0\}$ so that $x \p_{n}= {1\over a_n}(\p_{n+1} - b_n \p_n -c_n \p_{n-1}), n\geq 1$. For $\alpha=(\alpha_{1},\ldots, \alpha_{n})\in \mathbb{N}^{n}$, we define
$$ 
\p_\alpha(x) = \p_{\alpha}(x_{1}, \ldots, x_{n}) = \prod_{i=1}^{n} \p_{\alpha_{i}}(x_{i}).
$$
We easily check that 
\begin{eqnarray*}
x_{i} \p_{\alpha} &=& {1\over a_{\alpha_{i}}} (\p_{\alpha+e_{i}}- b_{\alpha_{i}} \p_\alpha - c_{\alpha_{i}} \p_{\alpha-e_{i}})
\end{eqnarray*}
where $e_i \in \Z^n$ is a vector with all zero entries except for a 1 in the $i$-th postion and with the convention that if $\beta\in \mathbb{Z}^{n}$ has a negative
 component, $\phi_{\beta}=0$. We consider the basis $\VV = \{\p_{\alpha}: |\alpha| \leq \rho \}$ for $V$ with $|\alpha| = \sum_{i=1}^n \alpha_i$. The matrix $\Res$ can be constructed such that it has columns indexed by all monomial multiples $x^\alpha f_i$ such that $x^\alpha f_i \in V$ (we use monomial bases for the $V_i$), and rows indexed by the basis $\VV$. The corresponding cokernel matrix represents a map $N : V \rightarrow \C^\D$ covering a TNF. The set $\W = \{\p_\alpha : |\alpha| < \rho \} \subset \VV$ is a basis for $W$. The matrix $N_{|W} = N_{\W}$ is again a submatrix of columns indexed by $\W$. To compute a TNF, we have to compute an invertible matrix $N_{|B}$ from $N_{\W}$. If this is done using $QR$ with pivoting, we have $\B = \{\p_{\beta_1}, \ldots, \p_{\beta_\D} \} \subset \W$ and $N_{|B} = N_\B$ is the submatrix of $N_\W$ with columns indexed by $\B$. Let $\beta_{ji}$ be the degree in $x_i$ of $\p_{\beta_j}$. Then the $j$-th column of $N_i = N_{|x_i \cdot B}$ is given by
$$ 
(N_{i})_j= {1 \over a_{\beta_{ji}}}(N_{\p_{\beta_j+e_{i}}} -b_{\beta_{ji}} N_{\p_{\beta_j}} - c_{\beta_{ji}} N_{\p_{\beta_j-e_{i}}})
$$
with the convention that an exponent with a negative component gives a zero column. Recall from Section \ref{sec:TNF} that $M_{x_i} = (N_{|B})^{-1} N_{i}$
represents the multiplication by $x_{i}$ in the basis $\mathcal{B}$ of $R/I$. The roots can then be deduced by eigen-computation as in the monomial case. Constructing the matrix $\Res$ in this way can be done using merely function evaluations of the monomial multiples of the $f_i$ by the properties of the orthogonal family $\{\phi_n\}$. This makes it particularly interesting to use bases for which there are fast $(O(d \log d))$ algorithms to convert a vector of function values to a vector of coefficients in the basis $\{\phi_n\}$. We now discuss the Chebyshev basis as an important example.

Recall that for the Chebyshev polynomials $\{T_n(x)\}$, the recurrence relation is given by $a_0 = 1$, $a_n = 2, n> 0$, $b_n = 0, n \geq 0$, $c_n = -1, n>0$. We get a basis $\B = \{T_{\beta_1}, \ldots, T_{\beta_\D} \}$. In this basis we obtain 
$$ 
N_{i}= {1 \over 2}(N_{\B_{+,i}}+ N_{\B_{-,i}})
$$
with $\B_{+,i} = \{T_{\beta_1+e_i}, \ldots, T_{\beta_\D +e_i} \}$ and $\B_{-,i} = \{T_{\beta_1-e_i}, \ldots, T_{\beta_\D -e_i} \}$ (negative exponents give a zero column by convention). Note that the expression is very simple here since the $a_n, b_n, c_n$ are independent of $n$. We define $$\omega_{k,d}= \cos \left(\frac{\pi(k+{1\over 2})}{d+1} \right ), \quad k=0,\ldots, d.$$
Let $f = \sum_{j=0}^{d} c_{j} T_{j}$ be the representation in the Chebyshev basis of a polynomial $f \in \C[x]$ and define $f_k = f(\omega_{k,d})$. By the property of $T_i$ that $T_n(x) = \cos(n \arccos(x))$ for $x \in [-1,1]$, we have
\begin{equation} \label{eq:cheb}
 f_k = \sum_{j=0}^d c_j \cos \left( \frac{j\pi (k + {1\over 2})}{d+1} \right ).
 \end{equation}
Comparing \eqref{eq:cheb} to the definition of the (type III) discrete cosine transform (DCT) $(Z_k)_{k=0}^d$ of a sequence $(z_k)_{k=0}^d$ of $d + 1$ complex numbers\footnote{We use the definitions of the discrete cosine transform that agree with the built in \texttt{dct} command in Matlab.}
$$Z_k = \sqrt{\frac{2}{d+1}} \left( \frac{1}{\sqrt{2}} z_0 + \sum_{j=1}^d z_j \cos \left(\frac{j \pi (k+{1 \over 2})}{d+1} \right) \right ),$$
we see that 
$$ \sqrt{\frac{2}{d+1}} (f_k)_{k=0}^d = \textup{DCT}\left ((\sqrt{2} c_0, c_1, \ldots, c_d)  \right ).$$
We conclude that the coefficients $c_k$ in the Chebyshev expansion can be computed from the function evaluations $f_k$ via the inverse discrete cosine transform (IDCT), which is the DCT of type II: 
$$ z_k = \sqrt{\frac{2}{d+1}} \left( \sum_{j = 0}^d Z_j \cos \left( \frac{k \pi (j + {1 \over 2})}{d+1} \right) \right).$$
This gives
$$ 
c_k= \left (\frac{1}{\sqrt{2}} \right )^{q_k} \left( \sqrt{\frac{2}{d+1}} \right ) \tilde{c}_k
$$
with $q_k = 1$ if $k = 0$, $q_k = 0$ otherwise and $(\tilde{c}_0, \ldots \tilde{c}_d) = \textup{IDCT}((f_0, \ldots, f_d))$.
Let $T_\alpha = T_{\alpha_1}(x_1) \cdots T_{\alpha_n}(x_n) \in \C[x_1, \ldots, x_n], \alpha \in \mathbb{N}^n$. For a polynomial $f(x) = f(x_1, \ldots, x_n) =\sum_{\alpha} c_{\alpha} T_{\alpha}(x)$ of degree $d_{i}$ in $x_{i}$, this generalizes as follows. We define an $n$-dimensional array $(f_k)_{k_1 = 0, \ldots, k_n = 0}^{d_1, \ldots, d_n}$ of function values given by 
$$ f_k = f_{k_1, \ldots, k_n} = f(\omega_{k,d}) = f(\omega_{k_{1},d_{1}}, \ldots, \omega_{k_{n},d_{n}}).
$$ 
We obtain another such array by performing an $n$-dimensional IDCT in the usual way: a series of $1$-dimensional IDCTs along every dimension of the array. This gives $(\tilde{c}_\alpha)_{\alpha_1 = 0, \ldots, \alpha_n = 0}^{d_1, \ldots, d_n}$ and the coefficients in the product Chebyshev basis are given by 
$$
c_{\alpha}= \left (\frac{1}{\sqrt{2}} \right )^{q_\alpha} \left ( \prod_{i=1}^n \sqrt{ \frac{2}{d_{i}+1}} \right ) \tilde{c}_\alpha 
$$
with $q_\alpha$ the number of zero entries in $\alpha$.
This shows that the coefficients $c_\alpha$ needed to construct the matrix of $\Res$ can be computed efficiently by taking an IDCT of an array of function values of the monomial multiples of the $f_i$. The development of this technique is future research.

A situation in which it is natural to use a product Chebyshev basis $\VV$ for $V$ is when $f_i = 0$ are (local) approximations of real transcendental (or higher degree algebraic) hypersurfaces. Chebyshev polynomials have remarkable interpolation and approximation properties on compact intervals of the real line, see \cite{trefethen2013approximation}. The multivariate product bases $\{T_\alpha \}$ inherit these properties for bounded boxes in $\R^n$. In \cite{nakatsukasa2015computing}, bivariate, real intersection problems are solved by local Chebyshev approximation, and this is what is implemented in the \texttt{roots} command of Chebfun2 (\cite{Townsend2013Extension}). If the ideal $I$ is expected to have many real solutions in a compact box of $\R^n$, it is probably a good idea to represent the generators in the Chebyshev basis. One reason is that functions with a lot of real zeroes have `nice coefficients' in this basis, whereas in the monomial basis, they don't. We work out an example of this in Section \ref{sec:numexps}.  

We conclude this subsection by noting that the monomials $\{x^n\}$ are a family of orthogonal polynomials on the complex unit circle and they satisfy the simple recurrence relation $x^{n+1} = x \cdot x^n$. This is an example of a so-called Szeg\H{o} recurrence. Coefficients can be computed by taking a fast Fourier transform of equidistant function evaluations on the unit circle. Such a Szeg\H{o} recurrence exists for all families of orthogonal polynomials on the unit circle and hence products of these bases can also be used in this context (\cite{szego1967orthogonal}). 
%\subsubsection*{Normal form in the Legendre basis}

%For the Legendre basis, we have 
%$a_0 = 1, a_n = \frac{2n+1}{n+1}, n >0$, $b_n = 0, n \geq 0$ and $c_n = \frac{-n}{n+1}, n >0$. 

\section{Numerical experiments} \label{sec:numexps}
In this section we show some experimental results. The aim is twofold:
\begin{enumerate}
\item to show the potential of the TNF approach as an
alternative for some state of the art polynomial system solvers, summarizing and extending the experiments in \cite{telen2018solving},
\item to illustrate the techniques presented in this paper.
\end{enumerate}  
We use a Matlab
implementation of the algorithms in \cite{telen2018solving} and of the algorithms presented here to
compute the multiplication tables. In most experiments, we then compute the roots from those
tables\footnote{An implementation in Julia has also been developed and is available at \url{https://gitlab.inria.fr/AlgebraicGeometricModeling/AlgebraicSolvers.jl}.}. For a description of how this second step works,
see
\cite{corless1997reordered,moller2001multivariate,elkadi_introduction_2007}.
In a first subsection, we show how affine dense, affine sparse and homogeneous systems can be solved accurately using TNFs. In Subsection \ref{subsec:homotopy} we summarize the comparison in \cite{telen2018solving} with the homotopy continuation packages PHCpack (\cite{verschelde1999algorithm}) and Bertini (\cite{bates2013numerically}). In Subsection \ref{subsec:maple} we compare the TNF algorithm to construct the multiplication matrices with a Groebner basis normal form method. We use Faug\`ere's FGb (\cite{FGb}) to compute a DRL Groebner basis of $I$ and construct the multiplication matrices starting from this Groebner basis using the built in package Groebner of Maple. In Subsections \ref{subsec:nongenericexp}, \ref{subsec:effconstrexp}, \ref{subsec:svdexp} and \ref{subsec:chebexp} we illustrate the results from Sections \ref{sec:nongeneric}, \ref{sec:effconstr}, Subsection \ref{subsec:svd} and \ref{subsec:cheb} respectively. In all of the experiments, the \textit{residual} is a measure for the backward error computed as in \cite{telen2018stabilized}. Using double precision arithmetic, the best residual one can hope for is of order $10^{-16}$. The experiments are performed on an 8 GB RAM machine with an intel Core i7-6820HQ CPU working at 2.70 GHz, unless stated otherwise. 
\subsection{Some nontrivial examples}
\subsubsection{Intersecting two plane curves of degree 170} 
Consider all monomials of $\C[x_1,x_2]$ of degree $\leq d$ and assign a (floating point, double precision) coefficient to each of these monomials drawn from a normal distribution with mean 0 and standard deviation 1. Doing this twice we obtain two dense polynomials $f_1(x_1,x_2)$ and $f_2(x_1,x_2)$. These polynomials each define a curve of degree $d$ in $\C^2$. The curves intersect in $\D = d^2$ points, according to B\'ezout's theorem. To show the potential of the TNF approach, we have solved this problem for degrees up to 170 on a 128 GB RAM machine with a Xeon E5-2697 v3 CPU working at 2.60 GHz. This is the only experiment that was carried out with a more powerful machine. Table \ref{tab:curves} shows some results. In the table, $r$ gives an upper bound for the residual of all $\D$ solutions and $t$ is the total computation time in minutes. We note that a polynomial of degree $d = 170$ in two variables has 14706 terms. 
\begin{table}[]
\centering
\begin{tabular}{c|c|c|c}
$d$ & $\D$  & $r$                   & $t$ (min) \\ \hline
50  & 2500  & $5.55 \cdot 10^{-11}$ & 0.3       \\
80  & 6400  & $1.97 \cdot 10^{-10}$ & 4.9       \\
100 & 10000 & $1.31 \cdot 10^{-9}$  & 18        \\
150 & 22500 & $8.84 \cdot 10^{-9}$  & 184       \\
160 & 25600 & $3.85 \cdot 10^{-9}$  & 278       \\
170 & 28900 & $1.08 \cdot 10^{-7}$  & 370      
\end{tabular}
\caption{Numerical results for intersecting generic plane curves.}
\label{tab:curves}
\end{table}
\subsubsection{A sparse problem}
We now consider $f_1, f_2 \in \C[x_1,x_2]$, each of bidegree (10,10). We construct two different systems. To every monomial in $\{x_1^{\alpha_1}x_2^{\alpha_2} : \alpha_1 \leq 10, \alpha_2 \leq 10 \}$ we assign 
\begin{enumerate}
\item a coefficient drawn from a normal distribution with zero mean and $\sigma = 1$, 
\item a coefficient drawn from a (discrete) uniform distribution over the integers $-50, \ldots, 50$.
\end{enumerate}
We refer to the resulting systems as system 1 and system 2
respectively. Algorithm 2 from \cite{telen2018solving} finds all 200 solutions with residual smaller than
$1.43 \cdot 10^{-12}$ for system 1 and $8.01 \cdot 10^{-14}$ for
system 2. Computations with polytopes are done using polymake
(\cite{polymake:FPSAC_2009}).
%, except for the computation of the mixed volume, which is done using
%PHCpack \cite{verschelde1999algorithm}.
We used QR with optimal column pivoting on $N_{|W}$ for the basis choice. Figure \ref{fig:basis} shows the resulting monomial bases for $R/I$ for the two different systems, identifying in the usual way the monoid of monomials in two variables with $\mathbb{N}^2$. Note that the basis does not correspond to a Groebner or border basis, it is not connected to 1. The total computation time was about 7 seconds for both systems.
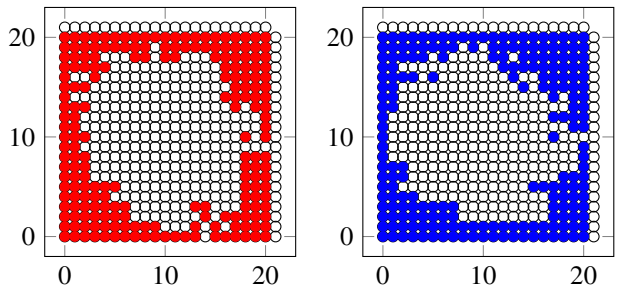
\begin{figure}
\centering
% This file was created by matlab2tikz.
%
%The latest updates can be retrieved from
%  http://www.mathworks.com/matlabcentral/fileexchange/22022-matlab2tikz-matlab2tikz
%where you can also make suggestions and rate matlab2tikz.
%
\begin{tikzpicture}

\begin{axis}[%
width=1.3in,
height=1.3in,
at={(0.752in,0.233in)},
scale only axis,
xmin=-2,
xmax=23,
ymin=-2,
ymax=23,
axis background/.style={fill=white}
]
\addplot [color=black, draw=none, only marks, mark=o, mark options={solid, black}, forget plot]
  table[row sep=crcr]{%
0	0\\
0	1\\
0	2\\
0	3\\
0	4\\
0	5\\
0	6\\
0	7\\
0	8\\
0	9\\
0	10\\
0	11\\
0	12\\
0	13\\
0	14\\
0	15\\
0	16\\
0	17\\
0	18\\
0	19\\
0	20\\
0	21\\
1	0\\
1	1\\
1	2\\
1	3\\
1	4\\
1	5\\
1	6\\
1	7\\
1	8\\
1	9\\
1	10\\
1	11\\
1	12\\
1	13\\
1	14\\
1	15\\
1	16\\
1	17\\
1	18\\
1	19\\
1	20\\
1	21\\
2	0\\
2	1\\
2	2\\
2	3\\
2	4\\
2	5\\
2	6\\
2	7\\
2	8\\
2	9\\
2	10\\
2	11\\
2	12\\
2	13\\
2	14\\
2	15\\
2	16\\
2	17\\
2	18\\
2	19\\
2	20\\
2	21\\
3	0\\
3	1\\
3	2\\
3	3\\
3	4\\
3	5\\
3	6\\
3	7\\
3	8\\
3	9\\
3	10\\
3	11\\
3	12\\
3	13\\
3	14\\
3	15\\
3	16\\
3	17\\
3	18\\
3	19\\
3	20\\
3	21\\
4	0\\
4	1\\
4	2\\
4	3\\
4	4\\
4	5\\
4	6\\
4	7\\
4	8\\
4	9\\
4	10\\
4	11\\
4	12\\
4	13\\
4	14\\
4	15\\
4	16\\
4	17\\
4	18\\
4	19\\
4	20\\
4	21\\
5	0\\
5	1\\
5	2\\
5	3\\
5	4\\
5	5\\
5	6\\
5	7\\
5	8\\
5	9\\
5	10\\
5	11\\
5	12\\
5	13\\
5	14\\
5	15\\
5	16\\
5	17\\
5	18\\
5	19\\
5	20\\
5	21\\
6	0\\
6	1\\
6	2\\
6	3\\
6	4\\
6	5\\
6	6\\
6	7\\
6	8\\
6	9\\
6	10\\
6	11\\
6	12\\
6	13\\
6	14\\
6	15\\
6	16\\
6	17\\
6	18\\
6	19\\
6	20\\
6	21\\
7	0\\
7	1\\
7	2\\
7	3\\
7	4\\
7	5\\
7	6\\
7	7\\
7	8\\
7	9\\
7	10\\
7	11\\
7	12\\
7	13\\
7	14\\
7	15\\
7	16\\
7	17\\
7	18\\
7	19\\
7	20\\
7	21\\
8	0\\
8	1\\
8	2\\
8	3\\
8	4\\
8	5\\
8	6\\
8	7\\
8	8\\
8	9\\
8	10\\
8	11\\
8	12\\
8	13\\
8	14\\
8	15\\
8	16\\
8	17\\
8	18\\
8	19\\
8	20\\
8	21\\
9	0\\
9	1\\
9	2\\
9	3\\
9	4\\
9	5\\
9	6\\
9	7\\
9	8\\
9	9\\
9	10\\
9	11\\
9	12\\
9	13\\
9	14\\
9	15\\
9	16\\
9	17\\
9	18\\
9	19\\
9	20\\
9	21\\
10	0\\
10	1\\
10	2\\
10	3\\
10	4\\
10	5\\
10	6\\
10	7\\
10	8\\
10	9\\
10	10\\
10	11\\
10	12\\
10	13\\
10	14\\
10	15\\
10	16\\
10	17\\
10	18\\
10	19\\
10	20\\
10	21\\
11	0\\
11	1\\
11	2\\
11	3\\
11	4\\
11	5\\
11	6\\
11	7\\
11	8\\
11	9\\
11	10\\
11	11\\
11	12\\
11	13\\
11	14\\
11	15\\
11	16\\
11	17\\
11	18\\
11	19\\
11	20\\
11	21\\
12	0\\
12	1\\
12	2\\
12	3\\
12	4\\
12	5\\
12	6\\
12	7\\
12	8\\
12	9\\
12	10\\
12	11\\
12	12\\
12	13\\
12	14\\
12	15\\
12	16\\
12	17\\
12	18\\
12	19\\
12	20\\
12	21\\
13	0\\
13	1\\
13	2\\
13	3\\
13	4\\
13	5\\
13	6\\
13	7\\
13	8\\
13	9\\
13	10\\
13	11\\
13	12\\
13	13\\
13	14\\
13	15\\
13	16\\
13	17\\
13	18\\
13	19\\
13	20\\
13	21\\
14	0\\
14	1\\
14	2\\
14	3\\
14	4\\
14	5\\
14	6\\
14	7\\
14	8\\
14	9\\
14	10\\
14	11\\
14	12\\
14	13\\
14	14\\
14	15\\
14	16\\
14	17\\
14	18\\
14	19\\
14	20\\
14	21\\
15	0\\
15	1\\
15	2\\
15	3\\
15	4\\
15	5\\
15	6\\
15	7\\
15	8\\
15	9\\
15	10\\
15	11\\
15	12\\
15	13\\
15	14\\
15	15\\
15	16\\
15	17\\
15	18\\
15	19\\
15	20\\
15	21\\
16	0\\
16	1\\
16	2\\
16	3\\
16	4\\
16	5\\
16	6\\
16	7\\
16	8\\
16	9\\
16	10\\
16	11\\
16	12\\
16	13\\
16	14\\
16	15\\
16	16\\
16	17\\
16	18\\
16	19\\
16	20\\
16	21\\
17	0\\
17	1\\
17	2\\
17	3\\
17	4\\
17	5\\
17	6\\
17	7\\
17	8\\
17	9\\
17	10\\
17	11\\
17	12\\
17	13\\
17	14\\
17	15\\
17	16\\
17	17\\
17	18\\
17	19\\
17	20\\
17	21\\
18	0\\
18	1\\
18	2\\
18	3\\
18	4\\
18	5\\
18	6\\
18	7\\
18	8\\
18	9\\
18	10\\
18	11\\
18	12\\
18	13\\
18	14\\
18	15\\
18	16\\
18	17\\
18	18\\
18	19\\
18	20\\
18	21\\
19	0\\
19	1\\
19	2\\
19	3\\
19	4\\
19	5\\
19	6\\
19	7\\
19	8\\
19	9\\
19	10\\
19	11\\
19	12\\
19	13\\
19	14\\
19	15\\
19	16\\
19	17\\
19	18\\
19	19\\
19	20\\
19	21\\
20	0\\
20	1\\
20	2\\
20	3\\
20	4\\
20	5\\
20	6\\
20	7\\
20	8\\
20	9\\
20	10\\
20	11\\
20	12\\
20	13\\
20	14\\
20	15\\
20	16\\
20	17\\
20	18\\
20	19\\
20	20\\
20	21\\
21	0\\
21	1\\
21	2\\
21	3\\
21	4\\
21	5\\
21	6\\
21	7\\
21	8\\
21	9\\
21	10\\
21	11\\
21	12\\
21	13\\
21	14\\
21	15\\
21	16\\
21	17\\
21	18\\
21	19\\
21	20\\
}; \label{blackcircles}
\addplot [color=red, only marks, draw=none, mark size=1.8pt, mark=*, mark options={solid, red}, forget plot]
  table[row sep=crcr]{%
20	0\\
0	20\\
0	19\\
0	0\\
1	0\\
1	20\\
2	0\\
0	1\\
2	20\\
19	0\\
0	18\\
0	2\\
1	1\\
0	3\\
1	19\\
20	20\\
1	2\\
3	20\\
2	1\\
3	0\\
2	19\\
20	1\\
0	17\\
1	3\\
0	4\\
19	20\\
0	16\\
2	2\\
20	2\\
17	20\\
4	0\\
18	20\\
2	3\\
1	4\\
20	19\\
3	1\\
0	5\\
0	15\\
18	0\\
0	14\\
2	4\\
3	19\\
3	2\\
19	1\\
4	20\\
5	0\\
17	0\\
0	13\\
1	5\\
6	0\\
0	6\\
19	2\\
3	3\\
4	1\\
20	17\\
19	19\\
2	5\\
20	18\\
4	2\\
20	3\\
18	19\\
1	18\\
16	20\\
4	3\\
14	20\\
15	20\\
5	20\\
6	20\\
20	4\\
17	19\\
0	7\\
5	1\\
7	20\\
16	0\\
20	5\\
19	17\\
0	11\\
7	0\\
1	6\\
0	9\\
19	18\\
17	1\\
19	3\\
8	20\\
13	20\\
20	16\\
18	17\\
3	4\\
2	6\\
2	18\\
11	0\\
15	0\\
11	20\\
20	15\\
19	4\\
1	17\\
13	0\\
19	5\\
4	4\\
18	18\\
12	20\\
9	0\\
10	20\\
6	1\\
20	14\\
0	8\\
18	2\\
0	12\\
17	17\\
20	7\\
9	20\\
18	1\\
0	10\\
10	0\\
18	16\\
19	16\\
1	13\\
20	8\\
4	19\\
1	10\\
1	7\\
8	0\\
17	18\\
2	17\\
19	15\\
16	19\\
18	15\\
20	6\\
2	8\\
14	2\\
3	18\\
16	2\\
7	1\\
19	8\\
1	9\\
1	11\\
17	16\\
19	14\\
12	0\\
1	15\\
19	6\\
16	17\\
3	5\\
18	3\\
19	7\\
16	18\\
15	19\\
5	5\\
17	15\\
15	1\\
1	12\\
6	3\\
5	3\\
18	14\\
8	1\\
2	10\\
13	19\\
6	2\\
18	5\\
20	12\\
4	5\\
8	19\\
16	16\\
3	16\\
18	6\\
5	2\\
18	7\\
19	13\\
14	3\\
1	8\\
17	2\\
2	15\\
20	10\\
7	19\\
17	14\\
2	7\\
6	19\\
20	11\\
13	1\\
11	19\\
12	19\\
4	17\\
9	1\\
20	13\\
17	13\\
15	18\\
14	19\\
8	18\\
13	3\\
7	18\\
16	14\\
18	4\\
10	19\\
18	10\\
2	13\\
10	18\\
5	19\\
18	8\\
3	17\\
11	18\\
}; \label{reddots}
\end{axis}

\begin{axis}[%
width=1.3in,
height=1.3in,
at={(2.4in,0.233in)},
scale only axis,
xmin=-2,
xmax=23,
ymin=-2,
ymax=23,
axis background/.style={fill=white}
]
\addplot [color=black, draw=none, mark=o, mark options={solid, black}, forget plot]
  table[row sep=crcr]{%
0	0\\
0	1\\
0	2\\
0	3\\
0	4\\
0	5\\
0	6\\
0	7\\
0	8\\
0	9\\
0	10\\
0	11\\
0	12\\
0	13\\
0	14\\
0	15\\
0	16\\
0	17\\
0	18\\
0	19\\
0	20\\
0	21\\
1	0\\
1	1\\
1	2\\
1	3\\
1	4\\
1	5\\
1	6\\
1	7\\
1	8\\
1	9\\
1	10\\
1	11\\
1	12\\
1	13\\
1	14\\
1	15\\
1	16\\
1	17\\
1	18\\
1	19\\
1	20\\
1	21\\
2	0\\
2	1\\
2	2\\
2	3\\
2	4\\
2	5\\
2	6\\
2	7\\
2	8\\
2	9\\
2	10\\
2	11\\
2	12\\
2	13\\
2	14\\
2	15\\
2	16\\
2	17\\
2	18\\
2	19\\
2	20\\
2	21\\
3	0\\
3	1\\
3	2\\
3	3\\
3	4\\
3	5\\
3	6\\
3	7\\
3	8\\
3	9\\
3	10\\
3	11\\
3	12\\
3	13\\
3	14\\
3	15\\
3	16\\
3	17\\
3	18\\
3	19\\
3	20\\
3	21\\
4	0\\
4	1\\
4	2\\
4	3\\
4	4\\
4	5\\
4	6\\
4	7\\
4	8\\
4	9\\
4	10\\
4	11\\
4	12\\
4	13\\
4	14\\
4	15\\
4	16\\
4	17\\
4	18\\
4	19\\
4	20\\
4	21\\
5	0\\
5	1\\
5	2\\
5	3\\
5	4\\
5	5\\
5	6\\
5	7\\
5	8\\
5	9\\
5	10\\
5	11\\
5	12\\
5	13\\
5	14\\
5	15\\
5	16\\
5	17\\
5	18\\
5	19\\
5	20\\
5	21\\
6	0\\
6	1\\
6	2\\
6	3\\
6	4\\
6	5\\
6	6\\
6	7\\
6	8\\
6	9\\
6	10\\
6	11\\
6	12\\
6	13\\
6	14\\
6	15\\
6	16\\
6	17\\
6	18\\
6	19\\
6	20\\
6	21\\
7	0\\
7	1\\
7	2\\
7	3\\
7	4\\
7	5\\
7	6\\
7	7\\
7	8\\
7	9\\
7	10\\
7	11\\
7	12\\
7	13\\
7	14\\
7	15\\
7	16\\
7	17\\
7	18\\
7	19\\
7	20\\
7	21\\
8	0\\
8	1\\
8	2\\
8	3\\
8	4\\
8	5\\
8	6\\
8	7\\
8	8\\
8	9\\
8	10\\
8	11\\
8	12\\
8	13\\
8	14\\
8	15\\
8	16\\
8	17\\
8	18\\
8	19\\
8	20\\
8	21\\
9	0\\
9	1\\
9	2\\
9	3\\
9	4\\
9	5\\
9	6\\
9	7\\
9	8\\
9	9\\
9	10\\
9	11\\
9	12\\
9	13\\
9	14\\
9	15\\
9	16\\
9	17\\
9	18\\
9	19\\
9	20\\
9	21\\
10	0\\
10	1\\
10	2\\
10	3\\
10	4\\
10	5\\
10	6\\
10	7\\
10	8\\
10	9\\
10	10\\
10	11\\
10	12\\
10	13\\
10	14\\
10	15\\
10	16\\
10	17\\
10	18\\
10	19\\
10	20\\
10	21\\
11	0\\
11	1\\
11	2\\
11	3\\
11	4\\
11	5\\
11	6\\
11	7\\
11	8\\
11	9\\
11	10\\
11	11\\
11	12\\
11	13\\
11	14\\
11	15\\
11	16\\
11	17\\
11	18\\
11	19\\
11	20\\
11	21\\
12	0\\
12	1\\
12	2\\
12	3\\
12	4\\
12	5\\
12	6\\
12	7\\
12	8\\
12	9\\
12	10\\
12	11\\
12	12\\
12	13\\
12	14\\
12	15\\
12	16\\
12	17\\
12	18\\
12	19\\
12	20\\
12	21\\
13	0\\
13	1\\
13	2\\
13	3\\
13	4\\
13	5\\
13	6\\
13	7\\
13	8\\
13	9\\
13	10\\
13	11\\
13	12\\
13	13\\
13	14\\
13	15\\
13	16\\
13	17\\
13	18\\
13	19\\
13	20\\
13	21\\
14	0\\
14	1\\
14	2\\
14	3\\
14	4\\
14	5\\
14	6\\
14	7\\
14	8\\
14	9\\
14	10\\
14	11\\
14	12\\
14	13\\
14	14\\
14	15\\
14	16\\
14	17\\
14	18\\
14	19\\
14	20\\
14	21\\
15	0\\
15	1\\
15	2\\
15	3\\
15	4\\
15	5\\
15	6\\
15	7\\
15	8\\
15	9\\
15	10\\
15	11\\
15	12\\
15	13\\
15	14\\
15	15\\
15	16\\
15	17\\
15	18\\
15	19\\
15	20\\
15	21\\
16	0\\
16	1\\
16	2\\
16	3\\
16	4\\
16	5\\
16	6\\
16	7\\
16	8\\
16	9\\
16	10\\
16	11\\
16	12\\
16	13\\
16	14\\
16	15\\
16	16\\
16	17\\
16	18\\
16	19\\
16	20\\
16	21\\
17	0\\
17	1\\
17	2\\
17	3\\
17	4\\
17	5\\
17	6\\
17	7\\
17	8\\
17	9\\
17	10\\
17	11\\
17	12\\
17	13\\
17	14\\
17	15\\
17	16\\
17	17\\
17	18\\
17	19\\
17	20\\
17	21\\
18	0\\
18	1\\
18	2\\
18	3\\
18	4\\
18	5\\
18	6\\
18	7\\
18	8\\
18	9\\
18	10\\
18	11\\
18	12\\
18	13\\
18	14\\
18	15\\
18	16\\
18	17\\
18	18\\
18	19\\
18	20\\
18	21\\
19	0\\
19	1\\
19	2\\
19	3\\
19	4\\
19	5\\
19	6\\
19	7\\
19	8\\
19	9\\
19	10\\
19	11\\
19	12\\
19	13\\
19	14\\
19	15\\
19	16\\
19	17\\
19	18\\
19	19\\
19	20\\
19	21\\
20	0\\
20	1\\
20	2\\
20	3\\
20	4\\
20	5\\
20	6\\
20	7\\
20	8\\
20	9\\
20	10\\
20	11\\
20	12\\
20	13\\
20	14\\
20	15\\
20	16\\
20	17\\
20	18\\
20	19\\
20	20\\
20	21\\
21	0\\
21	1\\
21	2\\
21	3\\
21	4\\
21	5\\
21	6\\
21	7\\
21	8\\
21	9\\
21	10\\
21	11\\
21	12\\
21	13\\
21	14\\
21	15\\
21	16\\
21	17\\
21	18\\
21	19\\
21	20\\
};
\addplot [color=blue, only marks, draw=none, mark size=1.8pt, mark=*, mark options={solid, blue}, forget plot]
  table[row sep=crcr]{%
0	0\\
0	1\\
1	0\\
0	2\\
2	0\\
3	0\\
1	1\\
0	3\\
4	0\\
0	20\\
5	0\\
20	20\\
0	19\\
1	2\\
20	0\\
6	0\\
0	4\\
20	19\\
18	0\\
0	18\\
20	18\\
19	0\\
0	5\\
7	0\\
2	1\\
1	20\\
19	20\\
17	0\\
20	1\\
20	17\\
16	0\\
4	1\\
3	1\\
0	6\\
19	19\\
8	0\\
0	17\\
1	3\\
18	20\\
2	2\\
5	1\\
0	7\\
20	16\\
0	16\\
9	0\\
19	1\\
3	2\\
19	18\\
13	0\\
1	19\\
2	20\\
1	4\\
15	0\\
10	0\\
12	0\\
14	0\\
0	15\\
20	2\\
17	20\\
0	8\\
16	20\\
0	9\\
19	17\\
11	0\\
20	3\\
18	19\\
4	2\\
20	15\\
0	14\\
18	1\\
18	18\\
6	1\\
3	20\\
15	20\\
1	5\\
20	4\\
5	20\\
7	20\\
2	19\\
4	20\\
20	14\\
6	20\\
19	2\\
1	18\\
7	1\\
17	19\\
19	16\\
1	6\\
18	17\\
14	20\\
5	2\\
17	18\\
13	20\\
0	10\\
19	3\\
20	6\\
16	18\\
8	1\\
20	5\\
3	19\\
18	2\\
16	19\\
18	16\\
15	1\\
11	20\\
15	18\\
4	19\\
3	3\\
12	20\\
19	15\\
0	11\\
15	19\\
19	5\\
6	2\\
17	1\\
1	16\\
6	19\\
18	3\\
19	4\\
14	1\\
1	7\\
0	13\\
17	17\\
1	17\\
4	3\\
14	19\\
19	14\\
2	3\\
14	18\\
2	18\\
17	15\\
20	13\\
20	8\\
0	12\\
2	4\\
18	5\\
13	19\\
10	20\\
5	19\\
18	4\\
17	16\\
18	6\\
17	5\\
20	7\\
2	7\\
1	15\\
17	3\\
12	19\\
14	17\\
1	14\\
19	13\\
9	20\\
17	2\\
1	13\\
7	19\\
17	4\\
3	18\\
7	2\\
17	14\\
19	12\\
16	1\\
4	18\\
5	3\\
15	17\\
6	3\\
13	1\\
3	4\\
20	9\\
11	19\\
6	18\\
13	16\\
16	17\\
16	15\\
9	1\\
2	6\\
5	16\\
11	1\\
8	20\\
19	11\\
18	15\\
17	12\\
20	11\\
7	3\\
17	10\\
19	6\\
3	16\\
11	18\\
2	16\\
10	1\\
16	5\\
18	12\\
12	1\\
15	5\\
14	15\\
12	18\\
16	16\\
12	17\\
20	12\\
13	17\\
9	19\\
}; \label{bluedots}
\end{axis}
\end{tikzpicture}%
\caption{Monomials spanning $V$ (\ref{blackcircles}) and monomials in the basis for system 1 (\ref{reddots}) and system 2 (\ref{bluedots}).}
\label{fig:basis}
\end{figure}
\subsubsection{Solutions at infinity}
As shown in \cite{telen2018solving} (Algorithm 3), TNFs can be used to solve homogeneous systems defining points in $\PP^n$. Consider 3 dense homogeneous equations $f_1,f_2,f_3$ in $\C[x_0, \ldots, x_3]$ of degree 3 with normally distributed coefficients as before. According to B\'ezout's theorem, there are (with probability 1) 27 solutions in the affine chart $x_0 = 1$ of $\PP^3$. We now manipulate the coefficients in the following way. Take the terms of $f_2$ not containing $x_0$ and replace the coefficients of $f_1$ standing with these monomials by the corresponding coefficients of $f_2$. Now $f_1$ and $f_2$ define the same curve of degree 3 in $\{x_0 = 0\} \simeq \PP_2$ and this curve intersects with $f_3(0,x_1,x_2,x_3)$ in 9 points according to B\'ezout's theorem. Viewing $\{x_0 = 0 \}$ as the hyperplane at infinity, we expect 9 solutions `at infinity'. Numerically, the coordinate $x_0$ will be very small and we can detect solutions at infinity by sending the points in $\PP^3$ to $\C^3$ by $(x_0:x_1:x_2:x_3) \mapsto( x_1/x_0,x_2/x_0,x_3/x_0)$ and, for example, looking for points with large Euclidean norms. Figure \ref{fig:homsol} shows the norms of the computed solutions in this affine chart. There are indeed 9 solutions at infinity. The computation takes 0.02 seconds. Residuals are of order $10^{-12}$. Doing the same for degree 10, 100 out of 1000 solutions lie at infinity. All solutions are found with residual no larger than $3.38 \cdot 10^{-11}$ within about 46 seconds.
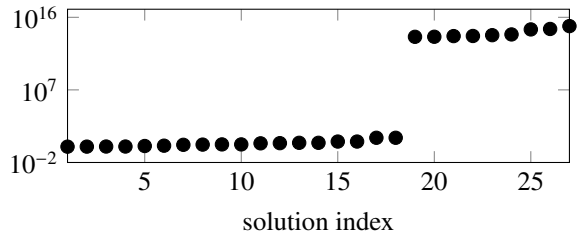
\begin{figure}
\centering
% This file was created by matlab2tikz.
%
%The latest updates can be retrieved from
%  http://www.mathworks.com/matlabcentral/fileexchange/22022-matlab2tikz-matlab2tikz
%where you can also make suggestions and rate matlab2tikz.
%
\begin{tikzpicture}

\begin{axis}[%
width=2.6in,
height=0.8in,
at={(0.772in,0.238in)},
scale only axis,
xmin=1,
xmax=27,
xlabel = solution index,
ymode=log,
ymin=1e-2,
ymax=1e+17,
yminorticks=true,
axis background/.style={fill=white}
]
\addplot [color=black, draw=none, mark size=2.5pt, mark=*, mark options={solid, black}, forget plot]
  table[row sep=crcr]{%
1	0.915547028466099\\
2	0.92733466834415\\
3	0.95141912554612\\
4	0.951419125546129\\
5	1.10489509207457\\
6	1.21157734317126\\
7	1.55346840361559\\
8	1.65233038243206\\
9	1.74324851516837\\
10	1.74324851516847\\
11	2.36423616058111\\
12	2.48623476612313\\
13	2.75287947261675\\
14	2.75287947261679\\
15	3.94260689144\\
16	3.94260689144002\\
17	11.5851931733205\\
18	11.5851931733213\\
19	38076714609666.9\\
20	38658395091710.5\\
21	46733163081231.7\\
22	48227662154893.4\\
23	60574298592843.7\\
24	74932716113735.1\\
25	313385615999133\\
26	358025113498849\\
27	806776523327509\\
};
\end{axis}
\end{tikzpicture}%
\caption{Norms of the computed solutions of 3 homogeneous equations in 4 variables in the affine chart $x_0 = 1$.}
\label{fig:homsol}
\end{figure}
\subsection{Comparison with homotopy solvers} \label{subsec:homotopy}
The homotopy continuation packages PHCpack and Bertini are standard
tools for solving a system of polynomial equations
(\cite{verschelde1999algorithm,bates2013numerically}). We define a
\textit{generic system} of degree $d$ in $n$ variables to be a system
defined by $n$ polynomials in $\C[x_1, \ldots, x_n]$ such that all
polynomials have coefficients with all monomials of degree $\leq d$
drawn from a normal distribution with zero mean and $\sigma = 1$. From
the numerical experiments in
\cite{telen2018solving,telen2018stabilized} we learn that an advantage
of algebraic methods over homotopy continuation methods is that they
guarantee to find numerical approximations of \textit{all} solutions. The homotopy packages (using standard double precision settings) tend to give up
on some of the paths once the systems become of larger degree
and consistently miss some solutions.
Table \ref{tab:homotopy} illustrates this for $n=2$ variables
and degrees $d \ge 25$ (see tables in Subsection 8.5 of
\cite{telen2018solving} for more details).
In the table, $r$ denotes the maximal residual of
all computed solutions by the TNF algorithm, $\D_{\textup{TNF}}$ denotes the
number of numerical solutions found by the TNF solver, $\Delta_{S}$ the
number of solutions missed by the solver $S$ and $t_S$ is the computation time used by solver $S$ to compute these $\D_S$ solutions. Note that $\D_{\textup{TNF}} = d^2$ is the B\'ezout number. We used standard, double precision settings for the solvers in this experiment. The residual for the homotopy solvers is of order unit round-off since they work intrinsically with Newton refinement. A drawback of the TNF approach (and in general, of all algebraic approaches) is that its complexity scales badly with the number of variables $n$, as explained in section \ref{sec:effconstr}. Although the TNF solver is faster than both homotopy packages for $n = 2$ up to degree at least $d = 61$ (Table \ref{tab:homotopy}), for $n = 3$ the cross-over lies already at degree 8 or 9 and for $n = 5, d= 3$ the algebraic solver is already slower by a factor 20. One has to keep in mind that \textit{all} solutions are found, though, with good accuracy. We show in Subsection \ref{subsec:effconstrexp} that the techniques introduced in Section \ref{sec:effconstr} can be used to push these cross-overs back to higher degrees.

\begin{table}[h!]
	\centering
	\footnotesize
	\pgfplotstabletypeset[ % local config, applies only for this table
	every head row/.style={before row=\toprule,after row=\midrule},
	every last row/.style={after row=\bottomrule},
	columns = {Var1,res,nbsol_qr,nbsol_phc,nbsol_brt,time_qr,time_phc,time_brt},
	columns/Var1/.style={int detect,column type=c|,column name=$d$},
	columns/res/.style={column type=c,column name= $r$},
	columns/nbsol_qr/.style={column type=c,column name= $\D_{\textup{TNF}}$},
	columns/nbsol_phc/.style={column type=c,column name= $\Delta_{\textup{phc}}$},
	columns/nbsol_brt/.style={column type=c,column name= $\Delta_{\textup{brt}}$},
	columns/time_qr/.style={column type=c,column name= $t_{\textup{TNF}}$},
	columns/time_phc/.style={column type=c,column name= $t_{\textup{phc}}$},
	columns/time_brt/.style={column type=c,column name= $t_{\textup{brt}}$},
	]
	{table_homotopy.txt}
	\caption{Numerical results for PHCpack, Bertini and our method for dense systems in $n=2$ variables of increasing degree $d$. }
	\label{tab:homotopy}
\end{table}
%\vspace{-1cm}
\subsection{Comparison with Groebner bases} \label{subsec:maple}
In this subsection we compare the TNF method with a Groebner basis normal form method. Once a monomial ordering is fixed, a reduced Groebner basis $g_1, \ldots, g_s$ provides a normal form onto the vector space $B$ spanned by a set $\B$ of monomials, called a `\textit{normal set}', see \cite{cox1}. This is the set of monomials that cannot be divided by any of the leading monomials of the polynomials in the Groebner basis. Any polynomial $f \in R$ can be written as 
$$ f = c_1g_1 + \ldots + c_s g_s + r$$
with $c_i$ and $r \in B$. Moreover, a Groebner basis has the property that such $r$ is unique and the normal form is given by $\N(f) = r$ (it is easily checked that $\N$ is indeed a normal form). For the normal set $\B$ we denote $\B = \{x^{\beta_1}, \ldots, x^{\beta_\D} \}$. The $j$-th column of the multiplication matrix $M_{x_i}$ is then given by $\N(x^{\beta_j + e_i})$. This gives an algorithm for finding the multiplication operators $M_{x_i}$. Table \ref{tab:groebner} summarizes the steps of the algorithm and gives the corresponding steps of the TNF algorithm.
\begin{table}[h!]
\centering
\begin{tabular}{L{0.3cm}|L{3.4cm}|L{3.4cm}}
 & TNF-QR algorithm & GB algorithm \\ \hline 
1 & \small{Construct $\Res$ and compute $N$}                                           & \small{Compute a DRL Groebner basis $G$ which induces a normal form $\N$}                                                               \\ \hline
2 & \small{QR with pivoting on $N_{|W}$ to find $N_{|B}$ corresponding to a basis $\B$ of $R/I$} & \small{Find a normal set $\B$ from $G$}                                                                \\ \hline
3 & \small{Compute the $N_i$ and set $M_{x_i} = (N_{|B})^{-1}N_i$                             } & \small{Compute the multiplication matrices by applying the induced normal form $\N$ on $x_i \cdot \B$}\\
\end{tabular}
\caption{Corresponding steps of the TNF algorithm and the Groebner basis algorithm}
\label{tab:groebner}
\end{table}

We have used Faug\`ere's FGb in Maple for step 1 (\cite{FGb}). This is considered state of the art software for computing Groebner bases. The routine \texttt{fgb\_gbasis} computes a Groebner basis with respect to the degree reverse lexicographic (DRL) monomial order. For step 2, we used the command \texttt{NormalSet} from the built-in Maple package Groebner to compute a normal set from this Groebner basis. Step 3 is done using the command \texttt{MultiplicationMatrix} from the Groebner package.  

An important note is that the Groebner basis computation has to be
performed in exact arithmetic, because of its unstable behaviour. We
will compare the speed of our algorithm with that of the Groebner
basis algorithm for computing the matrices $M_{x_i}$. The
multiplication operators computed by our algorithm correspond to
another basis $\B$, as shown before, and they are computed in finite
precision. Of course, a speed-up with respect to exact arithmetic is to be expected. The goal of this experiment is to quantify this speed-up and the price we pay for this speed-up (i.e.\ a numerical approximation error on the computed result). We learn from the experiments that for the generic systems tested
here, the resulting operators give numerical solutions that are
accurate up to unit round-off (in double precision) after one refining
step of Newton's iteration. That is, the residuals are never larger
than order $10^{-10}$ and because of quadratic convergence the unit
round-off ($\approx 10^{-16}$) is reached after one iteration. Using
Maple, the multiplication matrices are found \textit{exactly}, which
is of course an advantage of the use of exact arithmetic.
To compute the roots of the system, one can compute the eigenvalues of these multiplication operators by using a numerical method. This solving step is not integrated in the comparison.
%If this is done in double precision arithmetic, one expects solutions
%that are accurate (at most) up to $\approx 10^{-16}$.

We perform two different experiments: one in which the coefficients
are floating point numbers up to 16 digits of accuracy that are
converted in Maple to rational numbers, and one in which the coefficients are integers, uniformly distributed between $-50$ and $50$. We restrict Matlab to the use of only one core since Maple also uses only one.
%The experiments show that the use of floating point arithmetic leads to significantly less computation time, while the accuracy is still satisfactory for many applications. 
\subsubsection{Rational coefficients from floating point numbers} \label{subsubsec:maplefloat}
We construct a generic system of degree $d$ in $n$ variables by assigning a coefficient to every monomial of degree $\leq d$ drawn from a normal distribution with mean zero and $\sigma = 1$ for each of the $n$ polynomials defining the system. Computing the multiplication matrices via TNFs in Matlab and the roots from their eigenstructure we observe that the residuals for the tested degrees are no larger than order $10^{-12}$. We compare the computation time needed for finding the multiplication matrices using our algorithm with the time needed for the Groebner basis algorithm as described in Table \ref{tab:groebner}. The float coefficients are approximated up to 16 digits of accuracy by a rational number in Maple, before starting the computation. This results in rational numbers with large numerators and denominators, which makes the computation in exact arithmetic very time consuming. Results are shown in Table \ref{tab:float}.
\begin{table}[h!]
	\centering
	\footnotesize
	\pgfplotstabletypeset[ % local config, applies only for this table
	every nth row={9}{before row=\midrule},
	every nth row={13}{before row=\midrule},
	every nth row={15}{before row=\midrule},
	every head row/.style={before row=\toprule,after row=\midrule},
	%every last row/.style={after row=\bottomrule},
	columns = {n,d,matlabfloat,maplefloat,factorfloat},
	columns/n/.style={int detect,column type=c|,column name=$n$},
	columns/d/.style={int detect,column type=c|,column name= $d$},
	columns/matlabfloat/.style={column type=c,column name= $t_{\textup{TNF}}$},
	columns/maplefloat/.style={column type=c,column name= $t_{\textup{GB}}$},
	columns/factorfloat/.style={column type=c,column name= $t_{\textup{GB}}/t_{\textup{TNF}}$},
	]
	{Tfloat.txt}
	\caption{Timing results for the TNF algorithm ($t_{\textup{TNF}}$ (sec)) and the Groebner basis algorithm in Maple ($t_{\textup{GB}}$ (sec)) for generic systems in $n$ variables of degree $d$ with floating point coefficients drawn from a normal distribution with zero mean and $\sigma = 1$.}
	\label{tab:float}
\end{table}
We conclude that the TNF method using floating point arithmetic can lead to a huge reduction of the computation time in these situations and, with the right choice of basis for the quotient algebra, the loss of accuracy is very small. 
\subsubsection{Integer coefficients}
We now construct a generic system of degree $d$ in $n$ variables by
assigning a coefficient to every monomial of degree $\leq d$ drawn
from a discrete uniform distribution on the integers -50, \ldots, 50
for each of the $n$ polynomials defining the system. Roots can be
found using our algorithm with a residual no larger than order
$10^{-10}$ for all the tested degrees. Table \ref{tab:int} shows that
the Groebner basis method in exact precision is faster with these
`simple' coefficients, but the speed-up by using the TNF algorithm
with floating point arithmetic is still significant. 

\begin{table}[h!]
	\centering
	\footnotesize
	\pgfplotstabletypeset[ % local config, applies only for this table
	every nth row={15}{before row=\midrule},
	every nth row={24}{before row=\midrule},
	every nth row={28}{before row=\midrule},
	every head row/.style={before row=\toprule,after row=\midrule},
	%every last row/.style={after row=\bottomrule},
	columns = {n,d,matlabint,mapleint,factorint},
	columns/n/.style={int detect,column type=c|,column name=$n$},
	columns/d/.style={int detect,column type=c|,column name= $d$},
	columns/matlabint/.style={column type=c,column name= $t_{\textup{TNF}}$},
	columns/mapleint/.style={column type=c,column name= $t_{\textup{GB}}$},
	columns/factorint/.style={column type=c,column name= $t_{\textup{GB}}/t_{\textup{TNF}}$},
	]
	{Tint.txt}
	\caption{Timing results for the TNF algorithm ($t_{\textup{TNF}}$ (sec)) and the Groebner basis algorithm in Maple ($t_{\textup{GB}}$ (sec)) for generic systems in $n$ variables of degree $d$ with integer coefficients uniformly distributed between -50 and 50.}
	\label{tab:int}
\end{table}

\subsection{Solving non-generic systems} \label{subsec:nongenericexp}
\subsubsection{Intersecting three spheres}
To illustrate the algorithm proposed in Section \ref{sec:nongeneric}, we consider the following example. Let $I = \ideal{f_1, f_2, f_3} \subset R = \C[x_1, x_2,x_3]$ be given by \vspace{-0.0cm} \\
%\vspace{-0.5cm}
\begin{minipage}{0.5\textwidth}
\begin{eqnarray*}
f_1 &=& x_1^2 - 2x_1 + x_2^2 + x_3^2, \\
f_2 &=& x_1^2 + x_2^2 - 2x_2 + x_3^2, \\
f_3 &=& x_1^2 + x_2^2 + x_3^2 - 2x_3. 
\end{eqnarray*}
\end{minipage}
\begin{minipage}{0.5\textwidth}
\hspace{0.5cm}
\includegraphics[scale=0.6]{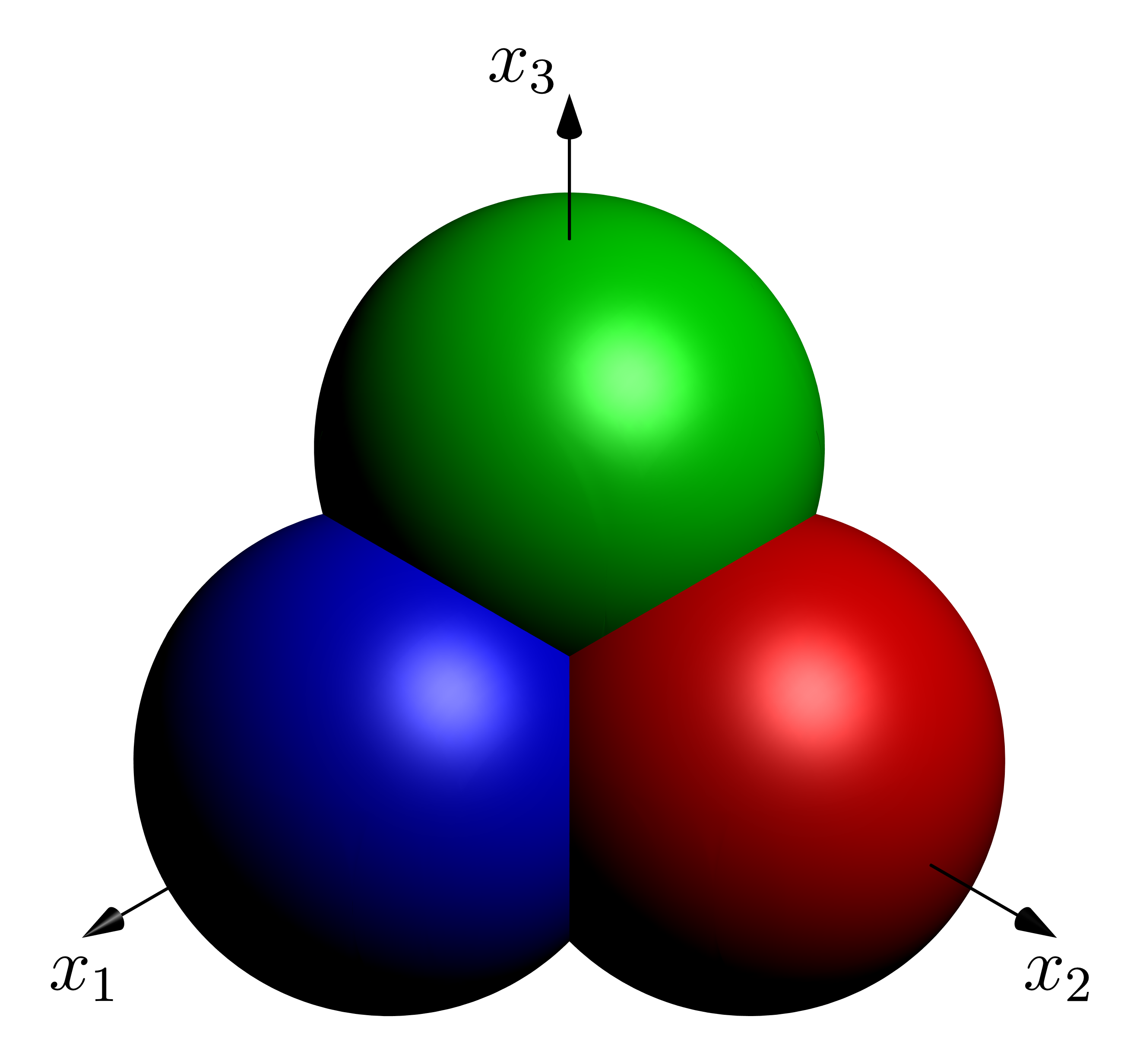}
\end{minipage}
\vspace{0.5cm} 

\noindent The equation $f_i = 0$ represents a sphere of radius 1 centered at $e_i \in \C^3$. The B\'ezout number of three quadratic surfaces is 8. However, there are only 2 solutions: $\V(I) = \{(0,0,0), (\frac{2}{3},\frac{2}{3},\frac{2}{3})\}$. Both solutions are regular. Homogenizing the equations we note that the $f_i$ define a curve at infinity. This means that using the projective version of the TNF algorithm in \cite{telen2018solving} will not solve this problem: it assumes finitely many solutions in $\PP^3$. We use Algorithm \ref{alg:non-generic} to find the roots. To this end, note that $\{1 + I, x_i + I \}$ is a basis for $R/I$. The vector spaces $W' = R_{\leq 1}, V' = R_{\leq 2}$ satisfy the conditions 1-3 of Theorem \ref{thm:nongeneric}. We consider the resultant map $\Res: \left ( R_{\leq 1} \right )^3 \rightarrow R_{\leq 3}: (q_1, q_2,q_3) \mapsto q_1f_1 +q_2f_2+q_3f_3$. The cokernel $N$ of $\Res$ has rank 9: $N: R_{\leq 3} \rightarrow \C^9$, yet $N_{|W}$ has rank $r_{V'} = 2$ ($W = V' = R_{\leq 2}$). We used the monomial basis for all the computations. A column pivoted QR outputs $\B' = \{1, x_3 \}$ as (representatives of) a monomial basis for $R/I$. Algorithm \ref{alg:non-generic} finds the two solutions with a \textit{forward error} of order $10^{-16}$ as the eigenvalues of a generalized pencil of four $2 \times 2$ matrices. Note that the standard Macaulay construction (presented in Section \ref{sec:effconstr}) gives $V = R_{\leq 4}$, which shows that Algorithm \ref{alg:non-generic} may lead to smaller matrices than the standard constructions in the case of systems with $\D' < \D$ solutions. Note that in this example, it is not sufficient to take $V = V' = R_{\leq 2}$ and $W' = R_{\leq 1}$ for the resultant construction, because this gives a resultant map with cokernel onto $\C^7$ and $\dim_\C(W') = 4 < \rank(N_{|W'}) = 7$.

\subsubsection{Intersecting two quartics with a common factor}
We illustrate how Algorithm \ref{alg:non-generic} can find the isolated points of a variety containing a one-dimensional irreducible component. To this end, we consider the ideal $I = \ideal{f_1,f_2} \subset R = \C[x_1,x_2]$ defined by 
\begin{eqnarray*}
f_1 &=& x_1^2x_2 + x_2^3 -x_2 -x_1^4-x_1^2x_2^2 + x_1^2 = (x_1^2+x_2^2-1)(x_2 - x_1^2), \\
f_2 &=& x_1^2x_2 + x_2^3 - x_2 +x_1^4+x_1^2x_2^2 - 9x_1^2 -8x_2^2+8 = (x_1^2+x_2^2-1)(x_2 + x_1^2 -8).
\end{eqnarray*}
It is clear that $\V(I)$ is the union of the unit circle and the two intersection points $\{ (\pm 2, 4) \}$ of two parabolas. We use Algorithm \ref{alg:non-generic} with $\rho = 3$, $V = R_{\leq \rho + 2}, V' = W =  R_{\leq \rho + 1}, W' = R_{\leq \rho}$. This leads to a regular pencil of size 9. Only two of the eigenvalues correspond to solutions, and the computed solutions are $\{(\pm 2, 4)\}$ up to a forward error of order $10^{-16}$.
\subsection{Efficient construction of TNFs} \label{subsec:effconstrexp}
Define a generic system of degree $d$ in $n$ variables as in \ref{subsubsec:maplefloat}. In this subsection, we illustrate the techniques presented in Section \ref{sec:effconstr} to speed up the TNF computation. Table \ref{tab:efficient} gives the results. In the table we present the computation times $t$ and the maximal residuals $r$ of three different algorithms: TNF stands for the standard TNF algorithm, FM stands for the algorithm suggested in Subsection \ref{subsec:fewmultiples} using fewer multiples of the input equations for the construction of $\Res$ and DBD represents the algorithm from Subsection \ref{subsec:degreebydegree} which computes the cokernel degree by degree. For all of the algorithms, we used pivoted QR for the basis selection.
\begin{table}[h!]
	\centering
	\footnotesize
	\pgfplotstabletypeset[ % local config, applies only for this table
	every nth row={12}{before row=\midrule},
	every nth row={16}{before row=\midrule},
	every nth row={18}{before row=\midrule},
	every nth row={19}{before row=\midrule},
	every head row/.style={before row=\toprule,after row=\midrule},
	%every last row/.style={after row=\bottomrule},
	columns = {nvec,Var1,timeTNF,ratFM,ratDBD,resTNF,resFM,resDBD},
	columns/nvec/.style={int detect,column type=c|,column name=$n$},
	columns/Var1/.style={int detect,column type=c|,column name= $d$},
	columns/timeTNF/.style={column type=c,column name= $t_{\textup{TNF}}$ (sec)},
	columns/ratFM/.style={column type=c,column name= $t_{\textup{TNF}}/t_{\textup{FM}}$},
	columns/ratDBD/.style={column type=c,column name= $t_{\textup{TNF}}/t_{\textup{DBD}}$},
	columns/resTNF/.style={column type=c,column name= $re_{\textup{TNF}}$},
	columns/resFM/.style={column type=c,column name= $re_{\textup{FM}}$},
	columns/resDBD/.style={column type=c,column name= $re_{\textup{DBD}}$},
	]
	{T.txt}
	\caption{Timing and relative error for the variants of the TNF algorithm presented in Section \ref{sec:effconstr} for generic systems in $n$ variables of degree $d$.}
	\label{tab:efficient}
\end{table}
For $n=2$, both alternatives don't give any improvements. As shown earlier, the TNF algorithm is very efficient as it is in this case. For $n>2$ we see that both FM and DBD can make the algorithm significantly faster for sufficiently high degrees, and not much (or none) of the accuracy is lost. The biggest speed-up we achieved in the experiment is a factor 5.77 for $n=4, d=5$. Solving such a system takes about 17 seconds using Bertini and 11 seconds using PHCpack. PHCpack loses 2 out of 625 solutions. The DBD algorithm takes less than 26 seconds to find all solutions with a residual no larger than $\pm 10^{-14}$. The unmodified TNF algorithm takes 3 to 4 times as much time as the homotopy solvers for $n=4, d=4$ (see the experiments in \cite{telen2018solving}). The DBD algorithm is as fast as PHCpack, which is 1.6 times faster than Bertini in this case. The algorithms do not beat the homotopy solvers for larger numbers of variables, even in small degrees. For $n = 7, d= 2$, both homotopy packages solve the problem in less than 4 seconds, while the fastest version of the TNF solver takes more than a minute.

To compare the FM algorithm with the classical Macaulay resultant construction where the $V_i$ are replaced by the span of a specific subset of monomials (see \cite[Chapter 3]{cox2}), we used this construction to solve the case $n = 3, d= 13$. The obtained residual was $1.44 \cdot 10^{-4}$, which is roughly a factor $10^7$ larger than the $re_{\textup{FM}}$. 

\subsection{Using SVD for the basis selection} \label{subsec:svdexp}
We use the toric variant of the TNF algorithm to compute the 24 real solutions of a complete intersection in $R = \C[x_1,x_2]$. For the basis selection, we use the singular value decomposition instead of QR as explained in Subsection \ref{subsec:svd}. The real curves defined by the generators of $I$ and the solutions are depicted in Figure \ref{fig:gridsys}. 
\begin{figure}[h!]
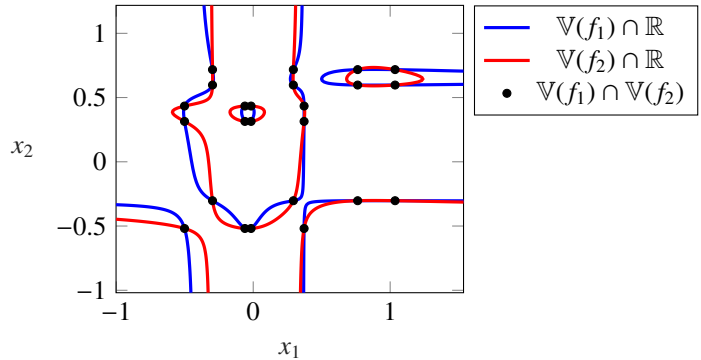

\centering
% This file was created by matlab2tikz.
%
%The latest updates can be retrieved from
%  http://www.mathworks.com/matlabcentral/fileexchange/22022-matlab2tikz-matlab2tikz
%where you can also make suggestions and rate matlab2tikz.
%
% [inline block 0: 1 envs, 72194 chars -> data_tex | \begin{tikzpicture} ...]
%
\caption{Real algebraic curves defined by $f_1$, $f_2$ and solutions of $I = \ideal{f_1, f_2}$ from Subsection \ref{subsec:svdexp}.}
\label{fig:gridsys}
\end{figure}
A qualitative picture of the resulting 24 basis elements in $\B = \{b_1, \ldots, b_\D \}$ is shown in Figure \ref{fig:orthbasis}. We show some contour lines on the real plane. Dark (blue) colours represent small absolute values of $b_i$, yellow colours correspond to high values. As monomials only vanish on the axes, we see that the obtained basis functions behave fundamentally differently. Especially the last basis functions (lower part of the figure) show some interesting action near the roots. We leave the possible relation between the root location and the orthogonal basis functions for future research. The residual using SVD in this example is $5.16 \cdot 10^{-12}$, for QR it is $2.84 \cdot 10^{-11}$.
\begin{figure}[h!]
\centering
\includegraphics[scale=0.75]{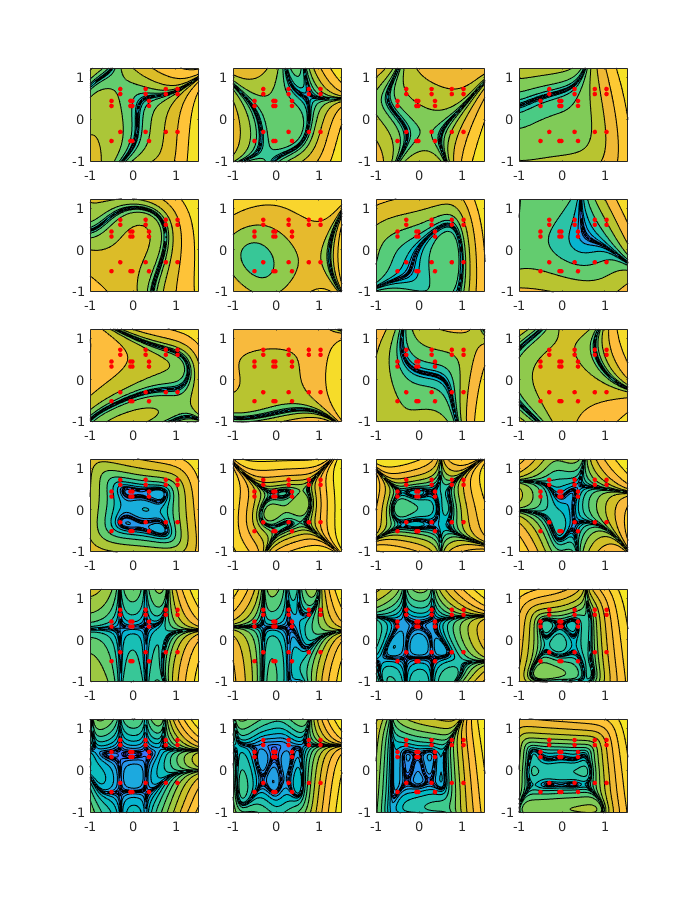}
\caption{Orthogonal basis for $R/I$ computed using the SVD on $N_{|W}$. The red dots are the roots of $I$.}
\label{fig:orthbasis}
\end{figure}

\subsection{TNFs in the product Chebyshev basis} \label{subsec:chebexp}
In this experiment we illustrate the use of Chebyshev polynomials in the construction of a TNF. To this end, we construct a polynomial system as follows. We fix a degree $d$ and define $f_1 = \sum_{|\alpha| \leq d} c_{\alpha,1} T_\alpha, f_2 = \sum_{|\alpha| \leq d} c_{\alpha,2} T_\alpha$ where $T_\alpha = T_{\alpha_1}(x_1) T_{\alpha_2}(x_2)$ as in Subsection \ref{subsec:cheb} and the $c_{\alpha,i}$ are drawn from a zero mean, $\sigma = 1$ normal distribution. Since the zeroes of $T_i$ are all in the real interval $[-1,1]$, we expect interesting things to happen in the box $[-1,1] \times [-1,1] \subset \R^2$ for the curves defined by $f_1, f_2$, so we expect a large number of real roots in a bounded region of $\R^2$. This is the situation in which we expect the Chebyshev basis to have good numerical properties. For $d = 20$, we computed the solutions using a TNF with QR for basis selection in the monomial basis and in the Chebyshev basis. The residuals of all 400 solutions are represented in Figure \ref{fig:hist20} in the form of a histogram. As expected, the Chebyshev TNF performs better. The TNF in the monomial basis still gives acceptable results: the largest residual is of order $10^{-6}$. 
\begin{figure}[h!]
\centering
% This file was created by matlab2tikz.
%
%The latest updates can be retrieved from
%  http://www.mathworks.com/matlabcentral/fileexchange/22022-matlab2tikz-matlab2tikz
%where you can also make suggestions and rate matlab2tikz.
%
\begin{tikzpicture}

\begin{axis}[%
width=1.8in,
height=1.0in,
at={(0.986in,0.233in)},
scale only axis,
point meta min=1,
point meta max=2,
colormap={mymap}{[1pt] rgb(0pt)=(0.2422,0.1504,0.6603); rgb(1pt)=(0.25039,0.164995,0.707614); rgb(2pt)=(0.257771,0.181781,0.751138); rgb(3pt)=(0.264729,0.197757,0.795214); rgb(4pt)=(0.270648,0.214676,0.836371); rgb(5pt)=(0.275114,0.234238,0.870986); rgb(6pt)=(0.2783,0.255871,0.899071); rgb(7pt)=(0.280333,0.278233,0.9221); rgb(8pt)=(0.281338,0.300595,0.941376); rgb(9pt)=(0.281014,0.322757,0.957886); rgb(10pt)=(0.279467,0.344671,0.971676); rgb(11pt)=(0.275971,0.366681,0.982905); rgb(12pt)=(0.269914,0.3892,0.9906); rgb(13pt)=(0.260243,0.412329,0.995157); rgb(14pt)=(0.244033,0.435833,0.998833); rgb(15pt)=(0.220643,0.460257,0.997286); rgb(16pt)=(0.196333,0.484719,0.989152); rgb(17pt)=(0.183405,0.507371,0.979795); rgb(18pt)=(0.178643,0.528857,0.968157); rgb(19pt)=(0.176438,0.549905,0.952019); rgb(20pt)=(0.168743,0.570262,0.935871); rgb(21pt)=(0.154,0.5902,0.9218); rgb(22pt)=(0.146029,0.609119,0.907857); rgb(23pt)=(0.138024,0.627629,0.89729); rgb(24pt)=(0.124814,0.645929,0.888343); rgb(25pt)=(0.111252,0.6635,0.876314); rgb(26pt)=(0.0952095,0.679829,0.859781); rgb(27pt)=(0.0688714,0.694771,0.839357); rgb(28pt)=(0.0296667,0.708167,0.816333); rgb(29pt)=(0.00357143,0.720267,0.7917); rgb(30pt)=(0.00665714,0.731214,0.766014); rgb(31pt)=(0.0433286,0.741095,0.73941); rgb(32pt)=(0.0963952,0.75,0.712038); rgb(33pt)=(0.140771,0.7584,0.684157); rgb(34pt)=(0.1717,0.766962,0.655443); rgb(35pt)=(0.193767,0.775767,0.6251); rgb(36pt)=(0.216086,0.7843,0.5923); rgb(37pt)=(0.246957,0.791795,0.556743); rgb(38pt)=(0.290614,0.79729,0.518829); rgb(39pt)=(0.340643,0.8008,0.478857); rgb(40pt)=(0.3909,0.802871,0.435448); rgb(41pt)=(0.445629,0.802419,0.390919); rgb(42pt)=(0.5044,0.7993,0.348); rgb(43pt)=(0.561562,0.794233,0.304481); rgb(44pt)=(0.617395,0.787619,0.261238); rgb(45pt)=(0.671986,0.779271,0.2227); rgb(46pt)=(0.7242,0.769843,0.191029); rgb(47pt)=(0.773833,0.759805,0.16461); rgb(48pt)=(0.820314,0.749814,0.153529); rgb(49pt)=(0.863433,0.7406,0.159633); rgb(50pt)=(0.903543,0.733029,0.177414); rgb(51pt)=(0.939257,0.728786,0.209957); rgb(52pt)=(0.972757,0.729771,0.239443); rgb(53pt)=(0.995648,0.743371,0.237148); rgb(54pt)=(0.996986,0.765857,0.219943); rgb(55pt)=(0.995205,0.789252,0.202762); rgb(56pt)=(0.9892,0.813567,0.188533); rgb(57pt)=(0.978629,0.838629,0.176557); rgb(58pt)=(0.967648,0.8639,0.16429); rgb(59pt)=(0.96101,0.889019,0.153676); rgb(60pt)=(0.959671,0.913457,0.142257); rgb(61pt)=(0.962795,0.937338,0.12651); rgb(62pt)=(0.969114,0.960629,0.106362); rgb(63pt)=(0.9769,0.9839,0.0805)},
xmin=-16.5,
xmax=-5.5,
xtick = {-16,-14,-12,-10,-8,-6},
ymin=0,
ymax=200,
axis background/.style={fill=white}
]

\addplot[area legend, table/row sep=crcr, patch, patch type=rectangle, shader=flat corner, draw=white!15!black, forget plot, patch table with point meta={%
1	2	3	4	1\\
6	7	8	9	1\\
11	12	13	14	1\\
16	17	18	19	1\\
21	22	23	24	1\\
26	27	28	29	1\\
31	32	33	34	1\\
36	37	38	39	1\\
41	42	43	44	1\\
46	47	48	49	1\\
51	52	53	54	1\\
56	57	58	59	1\\
61	62	63	64	1\\
66	67	68	69	1\\
71	72	73	74	1\\
76	77	78	79	1\\
81	82	83	84	1\\
}]
table[row sep=crcr] {%
x	y\\
-16.5	0\\
-16.5	0\\
-16.5	4\\
-15.5	4\\
-15.5	0\\
-15.5	0\\
-15.5	0\\
-15.5	155\\
-14.5	155\\
-14.5	0\\
-14.5	0\\
-14.5	0\\
-14.5	151\\
-13.5	151\\
-13.5	0\\
-13.5	0\\
-13.5	0\\
-13.5	40\\
-12.5	40\\
-12.5	0\\
-12.5	0\\
-12.5	0\\
-12.5	23\\
-11.5	23\\
-11.5	0\\
-11.5	0\\
-11.5	0\\
-11.5	12\\
-10.5	12\\
-10.5	0\\
-10.5	0\\
-10.5	0\\
-10.5	5\\
-9.5	5\\
-9.5	0\\
-9.5	0\\
-9.5	0\\
-9.5	7\\
-8.5	7\\
-8.5	0\\
-8.5	0\\
-8.5	0\\
-8.5	1\\
-7.5	1\\
-7.5	0\\
-7.5	0\\
-7.5	0\\
-7.5	2\\
-6.5	2\\
-6.5	0\\
-6.5	0\\
-6.5	0\\
-6.5	0\\
-5.5	0\\
-5.5	0\\
-5.5	0\\
-5.5	0\\
-5.5	0\\
-4.5	0\\
-4.5	0\\
-4.5	0\\
-4.5	0\\
-4.5	0\\
-3.5	0\\
-3.5	0\\
-3.5	0\\
-3.5	0\\
-3.5	0\\
-2.5	0\\
-2.5	0\\
-2.5	0\\
-2.5	0\\
-2.5	0\\
-1.5	0\\
-1.5	0\\
-1.5	0\\
-1.5	0\\
-1.5	0\\
-0.5	0\\
-0.5	0\\
-0.5	0\\
-0.5	0\\
-0.5	0\\
0.5	0\\
0.5	0\\
0.5	0\\
};
\end{axis}

\begin{axis}[%
width=1.8in,
height=1.0in,
at={(3.8in,0.233in)},
scale only axis,
point meta min=1,
point meta max=2,
colormap={mymap}{[1pt] rgb(0pt)=(0.2422,0.1504,0.6603); rgb(1pt)=(0.25039,0.164995,0.707614); rgb(2pt)=(0.257771,0.181781,0.751138); rgb(3pt)=(0.264729,0.197757,0.795214); rgb(4pt)=(0.270648,0.214676,0.836371); rgb(5pt)=(0.275114,0.234238,0.870986); rgb(6pt)=(0.2783,0.255871,0.899071); rgb(7pt)=(0.280333,0.278233,0.9221); rgb(8pt)=(0.281338,0.300595,0.941376); rgb(9pt)=(0.281014,0.322757,0.957886); rgb(10pt)=(0.279467,0.344671,0.971676); rgb(11pt)=(0.275971,0.366681,0.982905); rgb(12pt)=(0.269914,0.3892,0.9906); rgb(13pt)=(0.260243,0.412329,0.995157); rgb(14pt)=(0.244033,0.435833,0.998833); rgb(15pt)=(0.220643,0.460257,0.997286); rgb(16pt)=(0.196333,0.484719,0.989152); rgb(17pt)=(0.183405,0.507371,0.979795); rgb(18pt)=(0.178643,0.528857,0.968157); rgb(19pt)=(0.176438,0.549905,0.952019); rgb(20pt)=(0.168743,0.570262,0.935871); rgb(21pt)=(0.154,0.5902,0.9218); rgb(22pt)=(0.146029,0.609119,0.907857); rgb(23pt)=(0.138024,0.627629,0.89729); rgb(24pt)=(0.124814,0.645929,0.888343); rgb(25pt)=(0.111252,0.6635,0.876314); rgb(26pt)=(0.0952095,0.679829,0.859781); rgb(27pt)=(0.0688714,0.694771,0.839357); rgb(28pt)=(0.0296667,0.708167,0.816333); rgb(29pt)=(0.00357143,0.720267,0.7917); rgb(30pt)=(0.00665714,0.731214,0.766014); rgb(31pt)=(0.0433286,0.741095,0.73941); rgb(32pt)=(0.0963952,0.75,0.712038); rgb(33pt)=(0.140771,0.7584,0.684157); rgb(34pt)=(0.1717,0.766962,0.655443); rgb(35pt)=(0.193767,0.775767,0.6251); rgb(36pt)=(0.216086,0.7843,0.5923); rgb(37pt)=(0.246957,0.791795,0.556743); rgb(38pt)=(0.290614,0.79729,0.518829); rgb(39pt)=(0.340643,0.8008,0.478857); rgb(40pt)=(0.3909,0.802871,0.435448); rgb(41pt)=(0.445629,0.802419,0.390919); rgb(42pt)=(0.5044,0.7993,0.348); rgb(43pt)=(0.561562,0.794233,0.304481); rgb(44pt)=(0.617395,0.787619,0.261238); rgb(45pt)=(0.671986,0.779271,0.2227); rgb(46pt)=(0.7242,0.769843,0.191029); rgb(47pt)=(0.773833,0.759805,0.16461); rgb(48pt)=(0.820314,0.749814,0.153529); rgb(49pt)=(0.863433,0.7406,0.159633); rgb(50pt)=(0.903543,0.733029,0.177414); rgb(51pt)=(0.939257,0.728786,0.209957); rgb(52pt)=(0.972757,0.729771,0.239443); rgb(53pt)=(0.995648,0.743371,0.237148); rgb(54pt)=(0.996986,0.765857,0.219943); rgb(55pt)=(0.995205,0.789252,0.202762); rgb(56pt)=(0.9892,0.813567,0.188533); rgb(57pt)=(0.978629,0.838629,0.176557); rgb(58pt)=(0.967648,0.8639,0.16429); rgb(59pt)=(0.96101,0.889019,0.153676); rgb(60pt)=(0.959671,0.913457,0.142257); rgb(61pt)=(0.962795,0.937338,0.12651); rgb(62pt)=(0.969114,0.960629,0.106362); rgb(63pt)=(0.9769,0.9839,0.0805)},
xmin=-16.5,
xmax=-5.5,
xtick = {-16,-14,-12,-10,-8,-6},
ymin=0,
ymax=200,
axis background/.style={fill=white}
]

\addplot[area legend, table/row sep=crcr, patch, patch type=rectangle, shader=flat corner, draw=white!15!black, forget plot, patch table with point meta={%
1	2	3	4	1\\
6	7	8	9	1\\
11	12	13	14	1\\
16	17	18	19	1\\
21	22	23	24	1\\
26	27	28	29	1\\
31	32	33	34	1\\
36	37	38	39	1\\
41	42	43	44	1\\
46	47	48	49	1\\
51	52	53	54	1\\
56	57	58	59	1\\
61	62	63	64	1\\
66	67	68	69	1\\
71	72	73	74	1\\
76	77	78	79	1\\
81	82	83	84	1\\
}]
table[row sep=crcr] {%
x	y\\
-16.5	0\\
-16.5	0\\
-16.5	0\\
-15.5	0\\
-15.5	0\\
-15.5	0\\
-15.5	0\\
-15.5	0\\
-14.5	0\\
-14.5	0\\
-14.5	0\\
-14.5	0\\
-14.5	0\\
-13.5	0\\
-13.5	0\\
-13.5	0\\
-13.5	0\\
-13.5	15\\
-12.5	15\\
-12.5	0\\
-12.5	0\\
-12.5	0\\
-12.5	40\\
-11.5	40\\
-11.5	0\\
-11.5	0\\
-11.5	0\\
-11.5	66\\
-10.5	66\\
-10.5	0\\
-10.5	0\\
-10.5	0\\
-10.5	120\\
-9.5	120\\
-9.5	0\\
-9.5	0\\
-9.5	0\\
-9.5	112\\
-8.5	112\\
-8.5	0\\
-8.5	0\\
-8.5	0\\
-8.5	40\\
-7.5	40\\
-7.5	0\\
-7.5	0\\
-7.5	0\\
-7.5	5\\
-6.5	5\\
-6.5	0\\
-6.5	0\\
-6.5	0\\
-6.5	2\\
-5.5	2\\
-5.5	0\\
-5.5	0\\
-5.5	0\\
-5.5	0\\
-4.5	0\\
-4.5	0\\
-4.5	0\\
-4.5	0\\
-4.5	0\\
-3.5	0\\
-3.5	0\\
-3.5	0\\
-3.5	0\\
-3.5	0\\
-2.5	0\\
-2.5	0\\
-2.5	0\\
-2.5	0\\
-2.5	0\\
-1.5	0\\
-1.5	0\\
-1.5	0\\
-1.5	0\\
-1.5	0\\
-0.5	0\\
-0.5	0\\
-0.5	0\\
-0.5	0\\
-0.5	0\\
0.5	0\\
0.5	0\\
0.5	0\\
};
\end{axis}
\end{tikzpicture}%
\caption{Histogram of $\log_{10}$ of the backward error for a system as described in Subsection \ref{subsec:chebexp} of degree 20 using the Chebyshev basis (left) and the monomial basis (right).}
\label{fig:hist20}
\end{figure}
If we increase the degree to $d = 25$, the difference in performance grows. There are 625 solutions in this case. Results are shown in Figure \ref{fig:hist25} and the curves are depicted in Figure \ref{fig:chebcurves}. Using monomials, one solution has residual of order $10^{-1}$, which means we basically lost this solution.
\begin{figure}[h!]
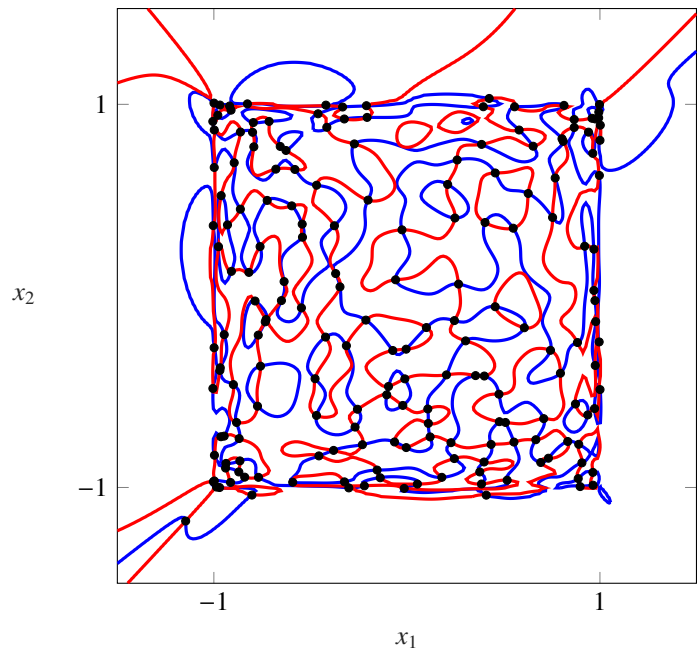

\centering
% This file was created by matlab2tikz.
%
%The latest updates can be retrieved from
%  http://www.mathworks.com/matlabcentral/fileexchange/22022-matlab2tikz-matlab2tikz
%where you can also make suggestions and rate matlab2tikz.
%
% [inline block 1: 2 envs, 286120 chars -> data_tex | \begin{tikzpicture} ...]
%
\caption{Real picture of a degree 25 system as described in Subsection \ref{subsec:chebexp}.}
\label{fig:chebcurves}
\end{figure}

\newpage
\section{Conclusion}
We have presented generalized and more efficient versions of the truncated normal form algorithm for solving systems of polynomial equations. More precisely, we presented an algorithm for solving non-generic systems based on TNFs, proposed fast algorithms for computing cokernel maps of resultant maps and we illustrated the flexibility to choose the type of basis functions for the quotient algebra in function of, for instance, where the roots are expected to be. The experiments show that the TNF method is competitive with the state of the art algebraic and homotopy based solvers and that the contributions of this paper can lead to significant improvements of the accuracy and efficiency. 
%\newpage

% \bibliographystyle{elsarticle-harv}
% \bibliography{references.bib}

\end{document}